\newtheorem{Thm}{Theorem}[section]
\newtheorem{Lemma}[Thm]{Lemma}
\newtheorem{Prop}[Thm]{Proposition}
\theoremstyle{definition}
\theoremstyle{remark}
\newtheorem{Ex}[Thm]{Example}
\newtheorem{Rmk}[Thm]{Remark}
\numberwithin{equation}{section}
\begin{document}

\title{Spectral parameter power series representation for regular solutions of the radial Dirac system}
\author{E. Roque, S.M. Torba \\{\small Departamento de Matem\'{a}ticas, Cinvestav, Unidad Quer\'{e}taro, }\\{\small Libramiento Norponiente \#2000, Fracc. Real de Juriquilla,
Quer\'{e}taro, Qro., 76230 MEXICO.}\\{\small e-mail: earoque@math.cinvestav.mx, storba@math.cinvestav.edu.mx}}

\maketitle

\begin{abstract}
A spectral parameter power series (SPPS) representation for the regular solution of the radial Dirac system with complex coefficients is obtained, as well as a SPPS representation for the (entire) characteristic function of the corresponding spectral problem on a finite interval. Based on the SPPS representation, a numerical method for solving spectral problems is developed. It is shown that the method is also applicable to solving spectral problems for perturbed Bessel equations. We exhibit that the proposed numerical method delivers excellent results. Additionally, an application of the method to find the energy values of hydrogen-like atoms with a finite radius is presented.
\end{abstract}

\section{Introduction}

The radial Dirac equation for a time independent and spherically symmetric problem can be reduced to a pair of coupled equations \cite{rDirac} :
\begin{equation}\label{eqn:phy-dirac}
\begin{cases}
\frac{dF}{dx}&=-(\frac{\kappa}{x}+\frac{V_{ps}(x)}{\hbar} )F + \frac{mc^2+V_s(x)-V_v(x)}{\hbar c }G + \frac{E}{\hbar c}G \\
\frac{dG}{dx}&=  \frac{mc^2+V_s(x)+V_v(x)}{\hbar c }F  +(\frac{\kappa}{x}+\frac{V_{ps}(x)}{\hbar} )G - \frac{E}{\hbar c}F
\end{cases}
\end{equation}

The radial functions \(F \) and \(G \) are known respectively as the large and small components of the wave function \( \Psi(x) = (F(x), G(x))^T \). In the equations above, \( V_s \) is a scalar potential, \(V_v \) is the time-component of a vector potential and \(V_{ps} \) is a pseudoscalar potential. On the other hand, \( c \) stands for the velocity of light, \( \hbar \) is the reduced Planck constant and \( m \) is the mass of the particle. The parameter \(\kappa \) is the relativistic angular-momentum quantum number, and it is a nonzero integer. \\

The Dirac system arises in quantum mechanics and it governs 1/2-spin particles such as electrons. Besides all the well-known applications of the Dirac system in the theory of quantum mechanics, recently, even very particular cases of the system \eqref{eqn:phy-dirac} when both the scalar potential and the vector potential are constant, have found applications to bilayer graphene in the Jackiw-Pi model, see \cite{Jackiw}. Due to its importance, numerical methods for solving the radial Dirac equation have been widely studied, see \cite{Almana-hp,Almana-linear,SALVAT1995151, TSOULOS2019237, Silbar2011, mudirac} and the references therein.\\

For square-integrable potentials on a finite interval \( (0,a] \), the radial Dirac system possesses up to a mutiplicative factor exactly one solution that is bounded at the origin and it is a square-integrable function \cite{Hryniv}. Throught this paper, this solution is referred to as a regular solution. We give the precise definition in Subsection \ref{sec:regular}. \\

In the present work we obtain an analytical representation for the regular solution of the radial Dirac system \eqref{eqn:dirac} in the form of a spectral parameter power series (SPPS), whose coefficients can be computed by the means of a recursive integration procedure. The SPPS method has been widely used to solve analytically and numerically ordinary and partial differential equations, together with different types of boundary-value and spectral problems, see e.g. \cite{SPPS, Castillo}. \\

Recently, the SPPS method has been constructed for the one-dimensional Dirac equation by Guti\'errez and Torba in \cite{Nelson}, and extended by Barrera-Figueroa, Rabinovich and Loredo-Ram\'irez for potentials with Dirac deltas in \cite{Rabinovich}. It is worth mentioning that our work presents significant changes to that of Guti\'errez and Torba both in the methods of proof and the main formulas of what are known as formal powers. Particularly, the latter changes have a major impact on the resulting numerical accuracy of the present work. \\

The paper is organized as follows. In Section \ref{sec:spps}, we consider a radial Dirac system whose right-hand side may contain an arbitrary matrix-function coefficient at the spectral parameter. In Subsection \ref{sec:regular}, we introduce a recursive system of functions known as formal powers. The recursive procedure to construct the formal powers starts with a particular solution \( (u_p, v_p)^T \) corresponding to the zero value of the spectral parameter, such that \( u_p \) does not vanish except at \(x = 0\). At the end of this subsection, we show how the regular solution can be written in terms of these formal powers; see Theorem \ref{thm:spps}. \\

Afterward, in Subsection \ref{sec:shift} we describe the spectral shift procedure, which allows us to find the regular solution starting from a particular solution corresponding to the parameter \(\lambda = \lambda_0   \) for a given \(\lambda_0 \). This technique has been widely used to improve the numerical accuracy of the SPPS-based methods.\\

Later, in Subsection \ref{sec:particular}, we show how we can apply the SPPS method to construct a particular solution corresponding to the zero value of the spectral parameter for a continuous potential. Then, we extend the results in Subsection \ref{sec:singq} and show that our method can be used to compute a particular solution even for singular potentials satisfying a certain growth condition at the origin \(x = 0\). We end Section \ref{sec:spps} discussing the existence of particular solutions with a non-vanishing first entry in Subsection \ref{sec:nonvanishing}. \\

In Section \ref{sec:numerical}, we propose the numerical algorithm for applying the SPPS representation to the solution of spectral problems and present numerical results for two specific problems. First, we show how we can transform a spectral problem for the perturbed Bessel equation to a spectral problem for the radial Dirac system in Subsection \ref{sec:bessel}. Our approach yields excellent numerical results and outperforms the method presented in \cite{Castillo}. Finally, in Subsection \ref{sec:atom} we show how we can use the SPPS representation to compute the energy levels of an electron orbiting a hydrogen-like atom with a finite radius.

\section{The spectral parameter power series representation for the regular solution of the radial Dirac system}\label{sec:spps}

With the aim of generality, and looking to keep a consistent notation with the previously published work \cite{Nelson}, we consider the following system
\begin{equation}\label{eqn:dirac}
\begin{cases}
v'+p_1u+(\frac{\kappa}{x}+q )v=\lambda (r_{11} u + r_{12}v ), \qquad x\in (0,a] \\
-u'+(\frac{\kappa}{x}+q )u+p_2v=\lambda ( r_{21} u + r_{22}v ).
\end{cases}
\end{equation}
In matrix form, the system can be written as
\begin{equation}
B \frac{dY}{dx} + P(x)Y = \lambda R(x) Y, \qquad Y(x) = \begin{pmatrix} u(x) \\ v(x) \end{pmatrix}
\end{equation}
where
\[
B = \begin{pmatrix} 0 & 1 \\ -1 & 0 \end{pmatrix}, \quad
P(x)= \begin{pmatrix} p_1(x) & \frac{\kappa}{x} + q(x) \\
\frac{\kappa}{x} + q(x) & p_2(x)
\end{pmatrix}, \quad
R(x) = \begin{pmatrix}
r_{11}(x) & r_{12}(x) \\
r_{21}(x) & r_{22}(x)
\end{pmatrix}.
\]
It is assumed that the following conditions are satisfied:
\begin{enumerate}[label*=(C\arabic{*}), ref=(C\arabic{*})]
\item \( p_i, r_{ij}, q \in C( [0,a]) \), \(i,j \in \{ 1,2 \}  \) are complex-valued functions of the real variable \(x \). \label{cdt:prq}
\item \( \lambda  \) is an arbitrary complex constant. \label{cdt:lam}
\item  \(  p_1(0) \neq 0 \). \label{cdt:p10}
\item \( \kappa \geq \frac{1}{2} \). \label{cdt:kappa}
\end{enumerate}
\begin{Rmk}
Condition \ref{cdt:prq} can be weakened for the potential \( q \). It is sufficient to have \( q \in C( (0,a]) \) with the growth condition at the origin
\[
\left\vert q(x) \right\vert \leq c_q x^\alpha,
\]
for some \( c_q >0 \) and \( -1< \alpha \leq 0 \). For the ease of exposition, we assume \( q \in C([0,a]) \) throughout most of our work. The validity of the method for the case of a singular potential can be found in Section \ref{sec:singq}.
\end{Rmk}

\begin{Rmk}
 Although the assumption \(p_1(0) \neq 0 \) seems quite restrictive, it can be easily avoided as long as \( p_1 \) and \( r_{11} \) do not vanish simultaneously at \( x=0 \), i.e., \( \vert  p_1(0) \vert + \vert r_{11}(0) \vert \neq 0 \). For instance, if \( p_1(0)=0 \) and \( R \) is a symmetric matrix, adding \( R(u, v)^T \) to both sides of \eqref{eqn:dirac} we get
\[
\begin{cases}
v'+(p_1+r_{11})u+(\frac{\kappa}{x}+q +r_{12} )v=\Lambda (r_{11} u + r_{12}v ), \\
-u'+(\frac{\kappa}{x}+q+r_{21} )u+(p_2+r_{22})v=\Lambda ( r_{21} u + r_{22}v ),
\end{cases}
\]
and the condition is satisfied for a new spectral parameter \( \Lambda \) defined as \( \Lambda:=\lambda+1 \ \). For the general case, see Section \ref{sec:shift}
\end{Rmk}

\begin{Rmk}
Condition \ref{cdt:kappa} can be replaced for the assumption that \( \left\vert \kappa \right\vert \geq \frac{1}{2} \). In the case where \( \kappa \leq -\frac{1}{2} \), the roles of \( u \) and \( v \) as the small and large components of the wave function are swapped.
\end{Rmk}

\subsection{Construction of the regular solution for the generalized radial Dirac system}\label{sec:regular}

Consider the homogeneous radial Dirac system
\begin{equation}\label{eqn:dirachsys}
\begin{cases}
v'+p_1u+(\frac{\kappa}{x}+q )v &= 0, \\
-u'+(\frac{\kappa}{x}+q )u+p_2v &= 0.
\end{cases}
\end{equation}
Assume, for the moment, that both \(p_{1} \) and \( p_2 \) do not vanish anywhere in the interval and that \( p_{1,2}, \, q \in C^1([0,a]) \). Dividing the first equation by \(p_1 \) and the second equation by \(p_2 \) and taking derivatives, two decoupled second-order differential equations can be obtained. After simplification,  the equations are
\begin{align}
u''&-\left( \frac{p_2'}{p_2} \right)u'-\left( \frac{\kappa(\kappa-1)}{x^2}+q_u \right)u=0, \label{eqn:uBhom} \\
v''&-\left( \frac{p_1'}{p_1} \right)v'-\left( \frac{\kappa(\kappa+1)}{x^2}+q_v \right)v=0, \label{eqn:vBhom}
\end{align}
where
\begin{equation}\label{eqn:qu}
q_u=\frac{\kappa(2p_2q-p_2')}{p_2}\frac{1}{x} + p_2 \left( \frac{q}{p_2} \right)'+q^2-p_2p_1,
\end{equation}
and
\begin{equation}\label{eqn:qv}
q_v=\frac{\kappa(2p_1q+p_1')}{p_1}\frac{1}{x} - p_1 \left( \frac{q}{p_1} \right)'+q^2-p_2p_1.
\end{equation}
Each of these equations is a perturbed Bessel equation. Now, any solution \( (u_0, v_0)^T \) of the homogeneous system \eqref{eqn:dirachsys} must necessarily satisfy that \( u_0 \) and \( v_0 \) are solutions to eqs. \eqref{eqn:uBhom} and \eqref{eqn:vBhom} respectively. We introduce some results related to the regularity of solutions of perturbed Bessel equations, in order to provide a precise definition of the regular solution of the radial Dirac system. \\

We recall to the reader that given two functions \( f_1(x) \) and \( f_2(x) \), they are called asymptotically equal as \( x \to x_0 \) iff
\[
f_1(x) = f_2(x)[1+o(1)],
\]
as \( x \to x_0 \). In particular, if \( f_2 \) does not vanish in a neighborhood of the point \( x_0 \), this is equivalent to
\[
\lim_{x\to x_0} \frac{f_1(x)}{f_2(x)} = 1.
\]
When any of these conditions occurs, we write
\[
f_1(x) \sim f_2(x) \quad x \to x_0.
\]
Using the SPPS approach, Castillo, Kravchenko and Torba \cite[Theorem 2.4]{Castillo} showed that for a perturbed Bessel equation of the form
\begin{equation}\label{eqn:bessel}
L \varphi=-\varphi'' + \left( \frac{l(l+1)}{x^2} + q(x) \right) \varphi= \lambda (r_1(x) \varphi' + r_0(x) \varphi),
\end{equation}
with \(  l \geq -\frac{1}{2}, \,\lambda \in \mathbb{C}, \, x \in (0,a], r_i \in C[0,a] \) and \( q(x) = O(x^\alpha), \, x\to 0, \alpha > -2  \); there exists a solution \(\varphi(x;\lambda) \) of \( L \varphi = \lambda \varphi \), that is bounded at the origin \( x=0 \) and satisfies the following asymptotic relations
\begin{align}
\varphi(x) &\sim x^{l+1}, \quad x \to 0, \label{eqn:asymp}\\
\varphi'(x) &\sim (l+1)x^l, \quad x \to 0. \label{eqn:asympd}
\end{align}
The solution satisfying the asymptotic relations above is called a regular solution of the perturbed Bessel equation.\\

For these reasons, throughout this work, by a regular solution  of the radial Dirac system \eqref{eqn:dirac} we mean a solution \( Y=(u,v) \) such that \( u \) and \( v \) are bounded at the origin and satisfy the asymptotic relations
\begin{align}
u(x) &\sim x^\kappa, x \to 0, \quad &u'(x) &\sim \kappa x^{\kappa-1}, x\to 0, \label{eqn:asymu} \\
v(x)&\sim \mu x^{\kappa+1}, x \to 0, \quad  &v'(x) &\sim \mu (\kappa+1) x^\kappa, x \to 0, \label{eqn:asymv}
\end{align}
where \( \mu:=\mu(\lambda) \) is a suitable constant for \( (u,v)^T \) to be a solution of eq. \eqref{eqn:dirac}. If for some particular value of \(\lambda \) one has that \( \mu=0 \), the asymptotic relation \eqref{eqn:asymv} should be read as \( v=o(x^{\kappa+1}) \). \\

We can compute the value of \( \mu \) by direct substitution of the asymptotic relations in the equation
\[
v' + p_1u + \left( \frac{\kappa}{x}+q \right)v=\lambda \left( r_{11}u+r_{12}v \right).
\]
Taking the limit \(x \to 0 \), we get
\begin{equation}
\mu =  \frac{\lambda r_{11}(0)-p_1(0)}{2\kappa+1}.
\end{equation}

Intuitively, we seek the regular solution of the radial Dirac system as a series of the form
\begin{equation}\label{eqn:ideasol}
\begin{pmatrix}
u \\ v
\end{pmatrix} = \sum_{n=0}^{\infty} \lambda^n \begin{pmatrix} u_n \\ v_n \end{pmatrix}.
\end{equation}
Formally substituting the right-hand side of the series in the system \eqref{eqn:dirac}, after comparing the coefficients of the equal powers of the parameter \( \lambda \), we see that the functions \(u_n, \, v_n \) must satisfy the following recursive system of differential equations
\begin{equation}\label{eqn:diracrec}
\begin{cases}
v_n'+p_1u_n+(\frac{\kappa}{x}+q )v_n= r_{11} u_{n-1}+ r_{12} v_{n-1}, \\
-u_n'+(\frac{\kappa}{x}+q )u_n+p_2v_n= r_{21} u_{n-1} + r_{22} v_{n-1},
\end{cases}
\end{equation}
for \( n \geq 0 \), where we take \( u_{-1}, \, v_{-1} \equiv 0 \) to simplify notations. Notice that \( (u_0,v_0 )^T \) is a solution of the homogeneous system
\begin{equation}\label{eqn:dirach}
B \frac{dY}{dx}+ P(x)Y=0.
\end{equation}
In order to have that the series \eqref{eqn:ideasol} satisfies the asymptotic relations \eqref{eqn:asymu} and \eqref{eqn:asymv} it is sufficient that we take  \( u_0 \equiv f, \, v_0 \equiv g  \), where \( (f,g)^T \) is a solution of the homogeneous system \eqref{eqn:dirach} which satisfies \eqref{eqn:asymfg}, and \( u_n, \, v_n \) satisfy the asymptotic relations \eqref{eqn:asymuv1} and \eqref{eqn:asymuvn} for \( n \geq 1 \).
\begin{align}
f(x)&\sim  x^\kappa, \, x \to 0, \quad &g(x)&\sim \frac{-p_1(0)}{2 \kappa+1}x^{\kappa+1}, \, x\to 0,  \label{eqn:asymfg} \\
 u_1(x) &= O(xf(x)), \quad &v_1(x) &\sim \frac{r_{11}(0)}{2 \kappa+1}x^{\kappa+1}, \, x\to 0, \label{eqn:asymuv1} \\
 u_n(x) &=  O(xf(x)), \quad &v_n(x) &= O(xg(x)),  \, n \geq 2. \label{eqn:asymuvn}
\end{align}
With this in mind, our aim is to find solutions to the non-homogeneous systems \eqref{eqn:diracrec} for \(n \geq 1 \), and show that their solutions satisfying \eqref{eqn:asymuv1} and \eqref{eqn:asymuvn}, respectively, are unique.
\begin{Lemma}\label{lemma:nhdirac}
Assume that the homogeneous system \eqref{eqn:dirach} possesses a solution \(Y_0 = (f,g)^T \) such that \( f,\, g \) satisfy the asymptotic relation \eqref{eqn:asymfg} and \( f \) does not vanish except at \( x=0. \) Then, there exists a unique solution to the non-homogeneous system
\begin{equation}\label{eqn:nhdirac}
\begin{cases}
v'+p_1u+(\frac{\kappa}{x}+q )v= h_1, \\
-u'+(\frac{\kappa}{x}+q )u+p_2v= h_2,
\end{cases}
\end{equation}
where \( h_{1,2} \in C([0,a]) \) and \( h_{1,2} = O(x^{\kappa+1}) \), that satisfies the condition \( u(x)=O(xf(x)) \) and \( v(x)=O(xg(x)) \) as \( x\to 0 \). Such solution can be written in the form
\begin{align}
u(x)&=f(x)\int_{0}^{x}\left[ -\frac{h_2(t)}{f(t)}+\frac{p_2(t)}{f^2(t)}\int_{0}^{t}\left[h_1(s)f(s)+h_2(s)g(s)\right]ds \right]dt, \label{eqn:uif} \\
v(x) &= \frac{1}{f(x)} \left[ g(x)u(x)+\int_{0}^{x}\left[h_1(t)f(t)+h_2(t)g(t)\right]dt   \right]. \label{eqn:vif}
\end{align}
\end{Lemma}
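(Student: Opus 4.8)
The plan is to solve \eqref{eqn:nhdirac} by the first-order analogue of reduction of order: since one solution $(f,g)^{T}$ of the homogeneous system \eqref{eqn:dirachsys} is available, the remaining freedom in \eqref{eqn:nhdirac} is governed by a single scalar first-order equation. The key observation is that for \emph{any} solution $(u,v)^{T}$ of \eqref{eqn:nhdirac} the combination $vf-gu$ has a remarkably simple derivative: substituting $v'$ and $u'$ from \eqref{eqn:nhdirac} and $f'$, $g'$ from the homogeneous equations \eqref{eqn:dirachsys}, all the terms carrying $p_{1}$, $p_{2}$ and $\bigl(\frac{\kappa}{x}+q\bigr)$ cancel and one is left with
\[
(vf-gu)' = h_{1}f+h_{2}g .
\]
Set $W(x):=\int_{0}^{x}\bigl[h_{1}(t)f(t)+h_{2}(t)g(t)\bigr]\,dt$; this is well defined because, using $f\sim x^{\kappa}$ and $g\sim\frac{-p_{1}(0)}{2\kappa+1}x^{\kappa+1}$ together with $h_{1,2}=O(x^{\kappa+1})$, the integrand is $O(x^{2\kappa+1})$, hence integrable at the origin since $\kappa\ge\frac12$. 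Thus $vf-gu=W+C$ for a constant $C$, and the prescribed behaviour $u=O(xf)$, $v=O(xg)$ forces $vf=O(x^{2\kappa+2})$ and $gu=O(x^{2\kappa+2})$, so both tend to $0$ as $x\to0$ and therefore $C=0$. This gives \eqref{eqn:vif}, namely $v=\frac1f\bigl(gu+W\bigr)$.

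Next I would insert this expression for $v$ into the second equation of \eqref{eqn:nhdirac}. Dividing the homogeneous identity $f'=\bigl(\frac{\kappa}{x}+q\bigr)f+p_{2}g$ by $f$ gives $\bigl(\frac{\kappa}{x}+q\bigr)+\frac{p_{2}g}{f}=\frac{f'}{f}$, and with this the second equation collapses to
\[
\Bigl(\frac{u}{f}\Bigr)' = -\frac{h_{2}}{f}+\frac{p_{2}}{f^{2}}\,W .
\]
The right-hand side is $O(x)$ near the origin (again from $h_{2}=O(x^{\kappa+1})$, $f\sim x^{\kappa}$, $W=O(x^{2\kappa+2})$), hence integrable, and integrating while using $u=O(xf)$ to kill the integration constant produces precisely \eqref{eqn:uif}. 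Every step of this derivation is reversible: defining $u$ and $v$ by \eqref{eqn:uif} and \eqref{eqn:vif} yields functions continuous on $[0,a]$ (with $u(0)=v(0)=0$) and $C^{1}$ on $(0,a]$; by construction they satisfy the second equation of \eqref{eqn:nhdirac} and the identity $vf-gu=W$, and differentiating this identity and substituting $f'$, $g'$ from \eqref{eqn:dirachsys} together with the (already verified) second equation returns the first equation of \eqref{eqn:nhdirac} after cancelling the nonvanishing factor $f$. Tracking orders in the integrals gives $u=O(x^{2}f)=O(xf)$ and $v=O(xg)$, so $(u,v)^{T}$ is the desired solution; uniqueness is then immediate, since the argument of the previous paragraph shows that \emph{every} solution with the required behaviour must be given by \eqref{eqn:uif}--\eqref{eqn:vif}.

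I expect the only delicate part to be the asymptotic bookkeeping at $x=0$: one must check that each integrand ($h_{1}f+h_{2}g$ and then $-h_{2}/f+(p_{2}/f^{2})W$) is integrable there, that the resulting $u,v$ extend continuously to $0$, and — most importantly — that the two constants of integration are genuinely pinned down by the conditions $u=O(xf)$, $v=O(xg)$ and not left free. This is exactly where the hypotheses $h_{1,2}=O(x^{\kappa+1})$, $\kappa\ge\frac12$, and $f\neq0$ on $(0,a]$ enter. The algebraic cancellation that makes $(vf-gu)'$ so simple is the $2\times2$ counterpart of the Wronskian/reduction-of-order identity and is routine to verify once the substitutions are written out. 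As an aside, uniqueness can also be obtained directly: the difference of two solutions solves the homogeneous system with $u=O(xf)$, and the same reduction forces it to equal $c(f,g)^{T}$ with $c=0$.
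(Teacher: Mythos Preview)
Your proof is correct and takes a genuinely different route from the paper. The paper first assumes extra regularity ($p_{1,2},q,h_{1,2}\in C^{1}$, $p_{1,2}$ non-vanishing, $g$ non-vanishing except at $0$), decouples \eqref{eqn:nhdirac} into two second-order perturbed Bessel equations, inverts these via Polya's factorization, and then uses integration by parts plus asymptotic arguments to fix the four integration constants and arrive at \eqref{eqn:uif}--\eqref{eqn:vif}; only at the very end does it drop the extra hypotheses by verifying the formulas directly. You instead stay at the first-order level throughout: the Wronskian-type identity $(vf-gu)'=h_{1}f+h_{2}g$ immediately yields \eqref{eqn:vif}, and substituting into the second equation reduces to the scalar equation $(u/f)'=-h_{2}/f+(p_{2}/f^{2})W$, giving \eqref{eqn:uif}. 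Your argument never needs $p_{1,2}$ or $g$ non-vanishing, never needs $C^{1}$ coefficients, and involves only two integration constants rather than four; it is shorter and avoids the detour through second-order equations. What the paper's approach buys is perhaps a clearer connection to the perturbed Bessel theory used elsewhere in the paper, but for proving the lemma itself your reduction-of-order argument is more efficient.
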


\begin{proof}
The proof is similar to that of \cite[Lemma 2.3]{Nelson}, however, we must have special care of the singularities at \(x=0 \). First, we obtain the formulas \eqref{eqn:uif} and \eqref{eqn:vif} in the case when \( h_{1,2},p_{1,2},q \in C^1[0,a] \), the functions \( p_1 \) and \(p_2 \) are non-vanishing on [0,a], both \(f, \,g \) do not vanish except at \( x=0 \) and \( h_{1,2}'(x) = O(x^\kappa)  \). \\

Similarly to the homogeneous case, by differentiation and algebraic transformations, we obtain a pair of decoupled perturbed Bessel equations for \( u \) and \(v \)
\begin{align}
u''&-\left( \frac{p_2'}{p_2} \right)u'-\left( \frac{\kappa(\kappa-1)}{x^2}+q_u \right)u=h_u, \label{eqn:u} \\
v''&-\left( \frac{p_1'}{p_1} \right)v'-\left( \frac{\kappa(\kappa+1)}{x^2}+q_v \right)v=h_v, \label{eqn:v}
\end{align}
where \( q_u, \, q_v \) are given by \eqref{eqn:qu} and \eqref{eqn:qv}, and \( h_u, h_v \) are given by
\begin{align}
h_u&=-p_2\left(\frac{h_2}{p_2} \right)'-\left( \frac{\kappa}{x}+q \right)h_2+p_2h_1, \label{eqn:hu}\\
h_v&=-p_1\left(\frac{h_1}{p_1} \right)'-\left( \frac{\kappa}{x}+q \right)h_1+p_1h_2. \label{eqn:hv}
\end{align}
The pair of functions \( f, \, g \) satisfy the homogeneous part of the equations, i.e, they are solutions of eqs. \eqref{eqn:uBhom} and \eqref{eqn:vBhom}. On the other hand, consider a differential operator of the form
\[
L=\frac{d^2}{dx^2}-\left( \frac{p'}{p} \right) \frac{d}{dx}-\tilde{q}.
\]
where \( p \in C^1[0,a] \) and \( p  \) does not vanish. If \( y_0 \) is a particular solution of the equation \( Ly_0=0 \) such that \( y_0 \) does not vanish except perhaps at \(x=0 \), then it admits the following Polya's factorization (see \cite{Polya}):
\[
L=\frac{p}{y_0} \partial \frac{y_0^2}{p} \partial \frac{1}{y_0}.
\]
The operator in its factored form can be easily inverted. Formally, we can write the solution to \( Ly=h \) using antiderivatives as follows
\[
y(x) = y_0(x) \int_{}^{} \frac{p(t)}{y_0^2(t)} \int_{}^{} \frac{h(s)y_0(s)}{p(s)} ds dt + C_1 y_0(x) \int_{}^{} \frac{p(t)}{y_0^2(t)}dt+C_2 y_0(x).
\]
In particular, any solution \( (u,v)^T \) of system \eqref{eqn:nhdirac} satisfies that
\begin{align}
u(x)&=f(x)\int_{}^{} \frac{p_2(t)}{f^2(t)} \int_{}^{} \frac{h_u(s) f(s)}{p_2(s)}dsdt+ c_{11} f(x) \int_{}^{} \frac{p_2(t)}{f^2(t)}dt+c_{12}f(x), \label{eqn:uns} \\
v(x)&=g(x)\int_{}^{} \frac{p_1(t)}{g^2(t)} \int_{}^{} \frac{h_v(s) g(s)}{p_1(s)}dsdt+ c_{21} g(x) \int_{}^{} \frac{p_1(t)}{g^2(t)}dt+c_{22}g(x), \label{eqn:vns}
\end{align}
for suitable constants \( c_{ij},  \, i,j \in \{1,2\}\). We are left to find appropriate limits of integration in order to have well-defined integrals. \\

First, neither of the expressions
\begin{align*}
\int_{0}^{x} \frac{p_2(t)}{f^2(t)}\int_{0}^{t} \frac{h_u(s) f(s)}{p_2(s)}ds dt&= \int_{0}^{x} \left\{\frac{p_2(t)}{f^2(t)}\int_{0}^{t} \left[ -\left(\frac{h_2(s)}{p_2(s)} \right)'-\left( \frac{\kappa}{s}+q(s) \right)\frac{h_2(s)}{p_2(s)}+h_1(s) \right] f(s) ds\right\}dt, \\
\int_{0}^{x} \frac{p_1(t)}{g^2(t)}\int_{0}^{t} \frac{h_v(s) g(s)}{p_1(s)}ds dt&= \int_{0}^{x} \left\{\frac{p_1(t)}{g^2(t)}\int_{0}^{t} \left[ \left(\frac{h_1(s)}{p_1(s)} \right)'-\left( \frac{\kappa}{s}+q(s) \right)\frac{h_1(s)}{p_1(s)}+h_2(s) \right] g(s) ds\right\} dt,
\end{align*}
is singular at \( x=0 \) due to the fact that \( h_{1,2} = O(x^{\kappa+1}), \, h_{1,2}' = O(x^\kappa)  \). Indeed, both terms in square-brackets are \(O(x^\kappa) \). Therefore, using the asymptotic relations \eqref{eqn:asymfg} for \(f \) and \(g \) we see that
\begin{align*}
\int_{0}^{x} \left[ -\left(\frac{h_2(s)}{p_2(s)} \right)'-\left( \frac{\kappa}{s}+q(s) \right)\frac{h_2(s)}{p_2(s)}+h_1(s) \right] f(s) ds &= O(x^{2\kappa+1}),\\
\int_{0}^{x} \left[ \left(\frac{h_1(s)}{p_1(s)} \right)'-\left( \frac{\kappa}{s}+q(s) \right)\frac{h_1(s)}{p_1(s)}+h_2(s) \right] g(s) ds &= O(x^{2\kappa+2}).
\end{align*}
The asymptotic relations \eqref{eqn:asymfg} also ensure that
\[
 \frac{1}{f(x)}  = O(x^{-\kappa}), \qquad  \frac{1}{g(x)}  = O(x^{-\kappa-1}) \qquad x \in (0,a].
\]
Thus,
\begin{align*}
\int_{0}^{x} \left\{\frac{p_2(t)}{f^2(t)}\int_{0}^{t} \left[ -\left(\frac{h_2(s)}{p_2(s)} \right)'-\left( \frac{\kappa}{s}+q(s) \right)\frac{h_2(s)}{p_2(s)}+h_1(s) \right] f(s) ds\right\}dt = O(x^2),\\
\int_{0}^{x} \left\{\frac{p_1(t)}{g^2(t)}\int_{0}^{t} \left[ \left(\frac{h_1(s)}{p_1(s)} \right)'-\left( \frac{\kappa}{s}+q(s) \right)\frac{h_1(s)}{p_1(s)}+h_2(s) \right] g(s) ds\right\} dt = O(x).
\end{align*}
The terms to the right of \( c_{11} \) and \( c_{21} \) constitute linearly independent solutions to \( f,\,g \) of the homogeneous equations \eqref{eqn:uBhom} and \eqref{eqn:vBhom} respectively. To ensure the existence of the integrals appearing in these expressions for any \( x>0 \), we take the limits of integration from \(a \) to \(x \), i.e.,
\[
f(x) \int_{a}^{x} \frac{p_2(t)}{f^2(t)}dt, \quad
g(x) \int_{a}^{x} \frac{p_1(t)}{g^2(t)}dt.
\]
Any of these integrals with other limits of integration would only differ from our choice by a constant, and therefore, a change of election of the limits of integration would also be an expression the form \eqref{eqn:uns} and \eqref{eqn:vns} with an appropriate adjustment of the constants \(c_{ij} \). \\

Notice that as \(x \) approaches zero, due to the asymptotic relations  \eqref{eqn:asymfg} and condition \ref{cdt:p10} the term \( g(x) \int_{a}^{x}\frac{p_1(t)}{g^2(t)} dt \) is not bounded. \\

Indeed, the condition \ref{cdt:p10} implies that either \( \mathrm{Re}(p_1(0)) \neq 0 \) or \( \mathrm{Im}(p_1(0)) \neq 0 \). Assume for simplicity that \( \mathrm{Re}(p_1(0)) \neq 0 \). Then, there exists \( 0< \delta < a \) such that \( \mathrm{Re}(p_1(0)) \) does not vanish in the interval \( [0,\delta] \) and it does not change sign. Therefore, for \( x \in (0,\delta) \) we have
\[
\int_{a}^{x} \frac{p_1(t)}{g^2(t)}dt = \int_{a}^{\delta} \frac{p_1(t)}{g^2(t)}dt + \int_{\delta}^{x}\frac{p_1(t)}{g^2(t)}dt.
\]
Moreover, there exists \( 0< \beta \leq 1 \) such that
\[
\left\vert  \int_{\delta}^{x}\frac{p_1(t)}{g^2(t)}dt \right\vert \geq \beta \left\vert \mathrm{Re} \int_{\delta}^{x}\frac{p_1(t)}{g^2(t)}dt \right\vert \longrightarrow \infty, \quad x \to 0.
\]
Using L'Hopital's rule, we see that \( g(x) \int_{a}^{x}\frac{p_1(t)}{g^2(t)} dt \sim \frac{2\kappa+1}{\kappa+1}x^{-\kappa}, \, x\to 0. \)
So as to have a bounded solution the coefficient \( c_{21} \) must be zero. \\

On the other hand, since we are looking for a solution that satisfies the asymptotics \( u(x) = O(xf(x)) \) and \( v(x) = O(xg(x)) \) as \(x \to 0 \), the coefficients \( c_{12}  \) and \( c_{22} \) must be zero as well. Since \( p_2(x) \) is a continuous arbitrary function, we cannot claim the same for \( c_{11}  \) at the moment. \\

Using integration by parts and recalling that \( f(0)=0 \) and \( g(0)=0 \), we obtain the formulas
\begin{align}
u(x)&=f(x)\int_{0}^{x}\left[ -\frac{h_2(t)}{f(t)}+\frac{p_2(t)}{f^2(t)}\int_{0}^{t}h_1(s)f(s)+h_2(s)g(s)ds \right]dt + c_{11}f(x)\int_{a}^{x} \frac{p_2(t)}{f^2(t)}dt, \label{eqn:utmp} \\
v(x)&=g(x)\int_{0}^{x}\left[ \frac{h_1(t)}{g(t)}+\frac{p_1(t)}{g^2(t)}\int_{0}^{t}h_1(s)f(s)+h_2(s)g(s)ds \right]dt. \end{align}
Setting \( w(t):=1/(f(t)g(t)) \) and \( z(t):=\int_{0}^{t}\left(f(s)h_1(s)+g(s)h_2(s) \right)ds \), we have that for any \( x>0 \)
\begin{align*}
\int_{0}^{x} \left[\frac{h_1(t)}{g(t)}+ \frac{p_1(t)}{g^2(t)}z(t)\right]dt &= \int_{0}^{x} \left[ w(t) \left( h_1(t)f(t)+h_2(t)g(t) \right) - \frac{h_2(t)}{f(t)}+ \frac{p_2(t)}{f^2(t)}z(t)\right.\\
&\quad\quad\quad\left.+ \left( \frac{p_1(t)}{g^2(t)}-\frac{p_2(t)}{f^2(t)} \right)z(t)\right] dt \\
&= \frac{u(x)}{f(x)}+ \int_{0}^{x} \left[ w(t) \left( h_1(t)f(t)+h_2(t)g(t) \right) + \left( \frac{p_1(t)}{g^2(t)}-\frac{p_2(t)}{f^2(t)} \right)z(t)\right] dt.
\end{align*}
Moreover, it can checked that \( (fg)'=p_2g^2-p_1f^2 \). Consequently \( \left( \frac{1}{fg} \right)'=\frac{p_1}{g^2}-\frac{p_2}{f^2} \). Thus,
\begin{equation}
\int_{0}^{x} \left[\frac{h_1(t)}{g(t)}+ \frac{p_1(t)}{g^2(t)}z(t)\right]dt=\frac{u(x)}{f(x)}+ \int_{0}^{x} \left[w(t)z'(t)+w'(t)z(t)\right]dt = \frac{u(x)}{f(x)}+\left(w(t)z(t) \right)\Big\vert_{0}^x.
\end{equation}
Since \( h_1(x)=O(x^{\kappa+1}) \), \( \lim_{x \to 0} w(x)z(x)=0 \), we have that
\begin{align}
v(x)&=g(x)\left[\frac{u(x)}{f(x)}+ \frac{1}{f(x)g(x)}\int_{0}^{x}\left[h_1(t)f(t)+h_2(t)g(t)\right]dt   \right] \label{eqn:vif-aux}\\
&=\frac{1}{f(x)} \left[ g(x)u(x)+\int_{0}^{x}\left[h_1(t)f(t)+h_2(t)g(t)\right]dt   \right], \label{eqn:vtmp}
\end{align}
which shows \eqref{eqn:vif}. Notice that if we show that \( (u,v)^T \) given by eqs. \eqref{eqn:uif} and \eqref{eqn:vif} is a solution of eq. \eqref{eqn:nhdirac}, by linearity, substitution of formulas \eqref{eqn:utmp} and \eqref{eqn:vtmp} into the system \eqref{eqn:nhdirac} gives us
\begin{equation*}
\begin{cases}
v'+p_1u+(\frac{\kappa}{x}+q )v= h_1+c_{11}\widehat{f}, \\
-u'+(\frac{\kappa}{x}+q )u+p_2v= h_2+c_{11}\left(-\widehat{f}\,'+(\frac{\kappa}{x}+q)\widehat{f}\right),
\end{cases}
\end{equation*}
where \( \widehat{f}(x):=f(x)\int_{a}^{x}\frac{p_2(t)}{f^2(t)}dt, \) which implies that \( c_{11} \) must equal zero. \\

We therefore proceed to show that this is indeed the case, without any additional assumptions on \( h_{1,2}, p_{1,2} \) and \( g \). First, we write
\begin{equation}
v= \frac{z}{f} + \frac{u}{f}g.
\end{equation}
Then,
\begin{equation}
v'=\frac{z'f-f'z}{f^2}+ \left( \frac{u}{f} \right)'g + \left( \frac{u}{f} \right)g'.
\end{equation}
On the other hand, using that \( (f,g)^T \) is a solution of the homogeneous system we get
\begin{equation}
\frac{z'f-f'z}{f^2}= \frac{1}{f^2} \left[ (h_1f+h_2 g)f - \left[ \left( \frac{\kappa}{x}+q \right)f +p_2g \right] z \right].
\end{equation}
Also,
\begin{equation}
g \left( \frac{u}{f} \right)' =g \left[  -\frac{h_2}{f} + \frac{p_2}{f^2}z\right],
\end{equation}
and
\begin{equation}
g'\left( \frac{u}{f} \right) = -p_1 u - \left( \frac{\kappa}{x}+q \right) g \frac{u}{f}.
\end{equation}
Consequently,
\begin{equation}
v'+p_1 u + \left( \frac{ \kappa}{x} +q \right)v= h_1.
\end{equation}
Finally, we can see that
\begin{equation}
-u'= -f' \frac{u}{f} + h_2 - \frac{p_2}{f}z = \left[ -\left( \frac{\kappa}{x}+q \right)f - p_2g\right]\frac{u}{f}+h_2-\frac{p_2}{f}z,
\end{equation}
which implies that
\begin{equation}
-u'+\left( \frac{\kappa}{x}+q \right)u+p_2v=h_2.
\end{equation}
Therefore, system \eqref{eqn:nhdirac} possesses a unique solution \( (u,v )^T \) given by eqs. \eqref{eqn:uif} and \eqref{eqn:vif} such that \(u \) and \(v \) satisfy the asymptotics \( u(x)=O(xf(x)) \) and \( v(x)=O(xg(x)) \) respectively.
\end{proof}

The next result verifies the validity of formulas \eqref{eqn:uif} and \eqref{eqn:vif} for a particular case where the hypothesis \( h_{1,2} = O(x^{\kappa+1}) \) is not satisfied, which corresponds to the first case \( n=1 \) of the recursive system \eqref{eqn:diracrec}.
\begin{Lemma}\label{lemma:rfgdirac}
Under the conditions of Lemma \ref{lemma:nhdirac},
there is a unique solution to the system
\begin{equation}\label{eqn:diracrfg}
\begin{cases}
v'+p_1u+(\frac{\kappa}{x}+q )v= r_{11} f + r_{12} g, \\
-u'+(\frac{\kappa}{x}+q )u+p_2v= r_{21} f + r_{22} g,
\end{cases}
\end{equation}
that satisfies the asymptotics \( u(x)=O(xf(x)) \) and \( v(x) \sim \frac{r_{11}(0)}{2\kappa+1}x^{\kappa+1} \) as \( x\to 0 \), given by
\begin{align}
u(x)&=f(x)\int_{0}^{x}\left[ -r_{21}(t)-r_{22}(t)\frac{g(t)}{f(t)}+\frac{p_2(t)}{f^2(t)}z(t) \right]dt, \label{eqn:u1} \\
v(x)&= \frac{1}{f(x)}z(x)+g(x)\frac{u(x)}{f(x)}  \label{eqn:v1},
\end{align}
where
\[
z(x):=\int_{0}^{x}\left[ f^2(t)r_{11}(t)+f(t)g(t)r_{12}(t)+f(t)g(t)r_{21}(t)+g^2(t)r_{22}(t) \right]dt.
\]
\end{Lemma}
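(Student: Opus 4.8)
The plan is to view \eqref{eqn:diracrfg} as the instance of the non-homogeneous system \eqref{eqn:nhdirac} with $h_1 = r_{11}f + r_{12}g$ and $h_2 = r_{21}f + r_{22}g$, and to rerun the argument of Lemma \ref{lemma:nhdirac}. The point is that these $h_{1,2}$ are continuous on $[0,a]$ but, by \eqref{eqn:asymfg}, only $O(x^\kappa)$ (not $O(x^{\kappa+1})$, since $r_{11}(0)$ and $r_{21}(0)$ may be nonzero), so the hypothesis of Lemma \ref{lemma:nhdirac} is unavailable. The first observation is that for these particular $h_{1,2}$ one has $h_1 f + h_2 g = r_{11}f^2 + (r_{12}+r_{21})fg + r_{22}g^2 = z'$ and $-h_2/f = -r_{21} - r_{22}g/f$, so that substituting $\int_0^t (h_1 f + h_2 g)\,ds = z(t)$ into \eqref{eqn:uif}--\eqref{eqn:vif} reproduces verbatim the proposed formulas \eqref{eqn:u1}--\eqref{eqn:v1}. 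Hence only three things need fresh arguments: convergence of the integrals at the origin, the corrected asymptotics of $v$, and uniqueness within the enlarged admissible class $\{u = O(xf),\ v\sim \frac{r_{11}(0)}{2\kappa+1}x^{\kappa+1}\}$.

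For convergence and asymptotics I would use \eqref{eqn:asymfg} to estimate $z(x) = \frac{r_{11}(0)}{2\kappa+1}x^{2\kappa+1} + o(x^{2\kappa+1})$ as $x\to 0$ (the $f^2 r_{11}$ term dominates the $fg$ and $g^2$ contributions). Then $p_2 z/f^2 = O(x)$, while $-r_{21} - r_{22}g/f = O(1)$ since $g/f = O(x)$; thus the integrand in \eqref{eqn:u1} is bounded near $0$, its integral is $O(x)$, and $u(x) = f(x)\,O(x) = O(xf(x))$. In \eqref{eqn:v1} the term $g(x)u(x)/f(x)$ is $O(x^{\kappa+2})$, whereas $z(x)/f(x) = \frac{r_{11}(0)}{2\kappa+1}x^{\kappa+1} + o(x^{\kappa+1})$, so $v(x)\sim \frac{r_{11}(0)}{2\kappa+1}x^{\kappa+1}$ (to be read as $v(x) = o(x^{\kappa+1})$ when $r_{11}(0) = 0$, in agreement with the convention stated after \eqref{eqn:asymv}).

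That the pair \eqref{eqn:u1}--\eqref{eqn:v1} actually solves \eqref{eqn:diracrfg} is then immediate, because it is \eqref{eqn:uif}--\eqref{eqn:vif} and the verification at the end of the proof of Lemma \ref{lemma:nhdirac} (set $v = z/f + (u/f)g$, differentiate, and use that $(f,g)^T$ solves \eqref{eqn:dirach} together with $(fg)' = p_2 g^2 - p_1 f^2$) invokes no decay hypothesis on $h_{1,2}$; it only needs the integrals to be well defined, which was just checked. For uniqueness, let $(U,V)^T$ be the difference of two solutions of \eqref{eqn:diracrfg} in the admissible class; it solves the homogeneous system \eqref{eqn:dirach} with $U = O(x^{\kappa+1})$ and $V = o(x^{\kappa+1})$. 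Since $\operatorname{tr}(B^{-1}P) \equiv 0$, Abel's formula gives that the Wronskian $fV - gU$ of $(f,g)^T$ and $(U,V)^T$ is constant; but $fV - gU = o(x^{2\kappa+1}) + O(x^{2\kappa+2})\to 0$, so it vanishes identically, whence $(U,V)^T = c\,(f,g)^T$ on $(0,a]$ (where $f\neq 0$). Then $U = cf \sim c x^\kappa$ together with $U = O(x^{\kappa+1})$ forces $c = 0$, i.e. $(U,V)^T \equiv 0$.

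The only real difficulty is bookkeeping: one must track the orders of vanishing carefully enough to see that relaxing the decay of $h_{1,2}$ by one power of $x$ costs exactly one power of $x$ in the asymptotics of $v$ and nothing in those of $u$, and one must identify precisely which steps of the proof of Lemma \ref{lemma:nhdirac} are insensitive to the decay hypothesis so that they may be quoted directly. No new analytic input is required.
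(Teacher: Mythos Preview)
Your argument is correct and takes a genuinely different route from the paper's. The paper re-runs the full derivation of Lemma~\ref{lemma:nhdirac}: it passes to the decoupled second-order equations under temporary extra hypotheses ($p_{1,2},r_{ij}\in C^1$, $p_{1,2}$ and $g$ non-vanishing), uses Polya's factorization to write down the general solution with four constants, and then tracks those constants. The key new computation there is that $\lim_{x\to 0} w(x)z(x) = -r_{11}(0)/p_1(0)$ via L'H\^opital (rather than $0$ as in Lemma~\ref{lemma:nhdirac}), which forces $\tilde c_{22}=r_{11}(0)/p_1(0)$ and produces the shifted asymptotic for $v$; uniqueness emerges from showing all four constants are determined. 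You instead start from the target formulas, verify convergence and the asymptotics of $u,v$ by direct order-counting on $z$, quote the hypothesis-free verification at the end of the proof of Lemma~\ref{lemma:nhdirac} to confirm they solve the system, and handle uniqueness by an Abel/Wronskian argument for the homogeneous difference.

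Your approach is shorter and avoids both the detour through the second-order equations and the auxiliary non-vanishing assumption on $g$; the Wronskian argument is also more transparent than chasing the four $\tilde c_{ij}$. What the paper's approach buys is a derivation rather than a verification: it explains \emph{where} the extra constant $r_{11}(0)/p_1(0)$ comes from and why the formula for $v$ has exactly this shape, whereas you take the formula as given. Both are complete proofs of the stated lemma.
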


\begin{proof}
For this proof, we set \( h_1(x) := r_{11}(x)f(x)+r_{12}(x)g(x) \) and \( h_2(x):=r_{21}(x)f(x)+r_{22}(x)g(x)\). Using the same reasoning as in the proof of Lemma \ref{lemma:nhdirac}, first we assume that \( p_{1,2}, r_{ij} \in C^1([0,a]) \) and \( p_{1,2} \) do not vanish anywhere in the interval. Also, we first assume that \( g \) does not vanish except at \( x=0 \). The procedure is almost the same. At first glance, it is not clear that the term
\begin{equation}\label{eqn:tmp}
\int_{0}^{x} \left[ \frac{p_1(t)}{g^2(t)}\int_{0}^{t} \left[ \left(\frac{h_1(s)}{p_1(s)} \right)'-\left( \frac{\kappa}{s}+q(s) \right)\frac{h_1(s)}{p_1(s)}+h_2(s) \right] g(s) ds \right] dt,
\end{equation}
is well-defined, because of division by \(g^2 \) which is \( O(x^{2\kappa+2}) \) and the previous estimate for \( h_1 \) cannot be used. However, using that \( (f,g)^T \) is a solution to the homogeneous Dirac system, we get that
\begin{align*}
\left(\frac{h_1(x)}{p_1(x)} \right)'-\left( \frac{\kappa}{x}+q(x) \right)\frac{h_1(x)}{p_1(x)}+h_2(x)&=f'(x)\frac{r_{11}(x)}{p_1(x)}+\left(\frac{r_{11}(x)}{p_1(x)} \right)'f(x)+\left( \frac{r_{12}(x)g(x)}{p_1(x)} \right)'\\
&\,-\left( \frac{\kappa}{x}+q(x) \right)\frac{r_{11}(x)f(x)}{p_1(x)} -\left( \frac{\kappa}{x}+q(x) \right)\frac{r_{12}(x)g(x)}{p_1(x)}+h_2(x)\\
&= \frac{g(x)r_{11}(x)p_2(x)}{p_1(x)}+\left(\frac{r_{11}(x)}{p_1(x)} \right)'f(x)+\left( \frac{r_{12}(x)g(x)}{p_1(x)} \right)'\\
&\, -\left( \frac{\kappa}{x}+q(x) \right)\frac{r_{12}(x)g(x)}{p_1(x)}+h_2(x)\\
&=O(x^\kappa),
\end{align*}
which shows that \eqref{eqn:tmp} is well-defined. Using integration by parts, it is possible to obtain similar formulas to those given by \eqref{eqn:utmp} and \eqref{eqn:vtmp}, which are
\begin{align}
u(x)&=f(x)\int_{0}^{x}\left[ -\frac{h_2(t)}{f(t)}+\frac{p_2(t)}{f^2(t)}\int_{0}^{t}h_1(s)f(s)+h_2(s)g(s)ds \right]dt + \tilde{c}_{11}f(x)\int_{a}^{x} \frac{p_2(t)}{f^2(t)}dt, \label{eqn:utmp2} \\
v(x)&=g(x)\int_{0}^{x}\left[ \frac{h_1(t)}{g(t)}+\frac{p_1(t)}{g^2(t)}\int_{0}^{t}h_1(s)f(s)+h_2(s)g(s)ds \right]dt+\tilde{c}_{22}g(x). \label{eqn:vtmp2}
\end{align}
We have already dropped the terms corresponding to the constants \( \tilde{c}_{12}, \tilde{c}_{21} \) with an analogous argument.
The simplification of the integral expression in \eqref{eqn:vtmp2} can be achieved just as in the previous lemma. We obtain
\begin{equation}
\int_{0}^{x} \left[\frac{h_1(t)}{g(t)}+ \frac{p_1(t)}{g^2(t)}z(t)\right]dt=\frac{u(x)}{f(x)}+ \int_{0}^{x} \left[w(t)z'(t)+w'(t)z(t)\right] dt = \frac{u(x)}{f(x)}+\left(w(t)z(t) \right)\Big\vert_{0}^x.
\end{equation}
where \( w(x)=\frac{1}{f(x)g(x)} \). Contrary to the previous lemma, using L'Hopital's rule and the fact that \( (fg)'=p_2g^2-p_1f^2 \) now we have
\[
\lim_{x\to 0} w(x)z(x)= \lim_{x\to 0} \frac{\int_{0}^{x}[h_1(t)f(t)+h_2(t)g(t)]dt}{f(x)g(x)} = \lim_{x\to 0} \frac{r_{11}(x)f^2(x)}{p_2(x)g^2(x)-p_1(x)f^2(x)} = -\frac{r_{11}(0)}{p_1(0)}.
\]
In order to have a solution that satisfies \( v(x) \sim \frac{r_{11}(0)}{2\kappa+1}x^{\kappa+1}, \, x \to 0 \), we take \( c_{22}= \frac{r_{11}(0)}{p_1(0)} \) to get
\begin{align*}
u(x)&=f(x)\int_{0}^{x}\left[ -r_{21}(t)-r_{22}(t)\frac{g(t)}{f(t)}+\frac{p_2(t)}{f^2(t)}z(t) \right]dt+\tilde{c}_{11} f(x) \int_{a}^{x} \frac{p_2(t)}{f^2(t)}dt,  \\
v(x)&=g(x) \left[ \frac{1}{f(x)g(x)}z(x)+\frac{u(x)}{f(x)} \right]=\frac{1}{f(x)}z(x)+g(x)\frac{u(x)}{f(x)}.
\end{align*}
Finally, we can verify without any additional hypothesis on \( p_{1,2}, \, r_{i,j} \) and \( g \) that \( (u,v)^T \) given by the formula above cannot be a solution to eq. \eqref{eqn:diracrfg} unless  \( \tilde{c}_{11}=0. \) Therefore, \( (u,v)^T \) where \(u \) and \(v \) are given by eqs. \eqref{eqn:u1} and \eqref{eqn:v1}, is indeed the unique solution to eq. \eqref{eqn:diracrfg} that satisfies the asymptotics \( u(x)=O(xf(x)) \) and \( v(x) \sim \frac{r_{11}(0)}{2\kappa+1}x^{\kappa+1} \) as \( x\to 0 \).

\end{proof}
Now, we introduce the system of recursive functions known as formal powers. As in other works of the SPPS method, the idea is to rewrite formulas \eqref{eqn:uif}, \eqref{eqn:vif}, \eqref{eqn:u1} and \eqref{eqn:v1} in such a way that only one integration is performed on each step. To keep consistency with the notation in \cite{Nelson} and maintain some symmetry between the formulas, we first consider the following system of functions
\begin{align}
X^{(0)}(x) &= 1, \\
Y^{(0)}(x) &= 1, \\
Z^{(n)}(x) &= \int_{0}^{x} \left( X^{(n)}(t)\left( f^2(t)r_{11}(t) +f(t)g(t)r_{21}(t)\right)+
            Y^{(n)}(t)\left( f(t)g(t)r_{12}(t) +g^2(t)r_{22}(t) \right) \right) dt, \label{eqn:Zn} \\
X^{(n+1)}(x) &= (n+1)\int_{0}^{x} \left( -r_{21}(t)X^{(n)}(t)-r_{22}(t)\frac{g(t)}{f(t)}Y^{(n)}(t)+\frac{p_2(t)}{f^2(t)}Z^{(n)}(t) \right) dt,\label{eqn:Xn} \\
Y^{(n+1)}(x) &= (n+1)\left( \frac{1}{f(x)g(x)}Z^{(n)}(x) \right)+X^{(n+1)}(x), \quad n\geq 0. \label{eqn:Yn}
\end{align}
Then we have that
\begin{equation}
\begin{pmatrix} u_n \\ v_n \end{pmatrix} = \frac{1}{n!} \begin{pmatrix} fX^{(n)} \\ gY^{(n)} \end{pmatrix}, \quad n \geq 0.
\end{equation}
\begin{Rmk}
Formula \eqref{eqn:Zn} differs from formula (2.6) in \cite{Nelson} due to a minor mistake in that work.
\end{Rmk}
The right-hand side of \eqref{eqn:Yn} is based on the intermediate-step formula \eqref{eqn:vif-aux} and presents the drawback of requiring \( g \) not to vanish except at \( x=0. \) As it is only necessary that \( f \) does not vanish in both Lemma \ref{lemma:nhdirac} and Lemma \ref{lemma:rfgdirac}, we depart slightly from the usual way of presenting the formal powers and introduce the following formulas instead:
\begin{align}
\widehat{X}^{(0)}(x)&=f(x)\\
\widehat{Y}^{(0)}(x)&=g(x)\\
\widehat{Z}^{(n)}(x)&= \int_{0}^{x} \left( \widehat{X}^{(n)}(t)\left( f(t)r_{11}(t) +g(t)r_{21}(t)\right)+
            \widehat{Y}^{(n)}(t)\left( f(t)r_{12}(t) +g(t)r_{22}(t) \right) \right) dt, \label{eqn:Znhat} \\
\widehat{X}^{(n+1)}(x) &= (n+1)f(x)\int_{0}^{x} \left( -\frac{r_{21}(t)}{f(t)}\widehat{X}^{(n)}(t)-\frac{r_{22}(t)}{f(t)}\widehat{Y}^{(n)}(t)+\frac{p_2(t)}{f^2(t)}\widehat{Z}^{(n)}(t) \right) dt, \label{eqn:Xnhat} \\
\widehat{Y}^{(n+1)}(x) &= (n+1)\left( \frac{1}{f(x)}\widehat{Z}^{(n)}(x) \right)+\frac{g(x)}{f(x)}\widehat{X}^{(n+1)}(x), \quad n\geq 0. \label{eqn:Ynhat}
\end{align}
The solution of the Dirac system \eqref{eqn:dirac} can be then written as a series of the form
\[
\begin{pmatrix} u \\ v \end{pmatrix} = \sum_{n=0}^{\infty} \frac{\lambda^n}{ n!} \begin{pmatrix} \widehat{X}^{(n)} \\ \widehat{Y}^{(n)} \end{pmatrix}.
\]
Using the asymptotic relations  \eqref{eqn:asymfg} and the fact that \( f \) is non-vanishing except at \( x=0 \), we ensure the existence of constants \( \tilde{c_1}, \, \tilde{c_2},  \) such that for any \( x \in (0,a] \) we have
\begin{align}
\vert f(x) \vert &\leq \tilde{c_1}x^\kappa,  & \vert g(x) \vert \leq \tilde{c_1}x^{\kappa+1} \\
 \max \{ \left\vert \frac{1}{f(x)} \right\vert,   \frac{\vert p_2(x) \vert^{1/2}}{\vert f(x) \vert}  \} &\leq \tilde{c_2}x^{-\kappa}. &
\end{align}
Additionally, we define the constants \( c_1,c_2,c_3  \) and \( \tilde{a} \) as
\begin{align}
c_1&:= \max \{ \tilde{c_1}, 1 \}, \label{eqn:c1}\\
c_2&:= \max \{ \tilde{c_2}, 1 \},\label{eqn:c2}\\
c_3&:= \max_{1\leq i,j \leq 2} \{ \Vert r_{ij} \Vert_\infty \}, \label{eqn:c3} \\
\tilde{a}&:= \max \{ 1, a \}. \label{eqn:atilde}
\end{align}
\begin{Lemma}\label{lemma:bounds}
Suppose that the homogeneous system \eqref{eqn:dirachsys} possesses a solution \(Y_0 = (f,g)^T \) such that \( f\) does not vanish except at \( x=0 \) and \(f, \, g \) satisfy the asymptotic relations \eqref{eqn:asymfg}. Additionally, assume that the constants \( c_1, c_2, c_3 \) satisfy \eqref{eqn:c1} to \eqref{eqn:c3}, and \( \tilde{a} \) satisfies \eqref{eqn:atilde}. Then the following estimates hold for the functions \( \widehat{X}^{(n)}, \, \widehat{Y}^{(n)}  \) for \( n \geq 1 \)

\begin{align}
\vert \widehat{X}^{(n)}(x) \vert & \leq c_1  M^n x^{n+\kappa}, \label{eqn:boundX}\\
\vert \widehat{Y}^{(n)}(x) \vert & \leq c_1  M^n x^{n+\kappa}, \label{eqn:boundY}
\end{align}
where
\begin{align*}
M&:=c_2M_Z+c_1c_2\tilde{a}M_X, \\
M_Z&:= c_1c_3(1+\tilde{a})^2, \\
M_X&:= c_1c_2c_3[(1+\tilde{a})+c_1c_2(1+\tilde{a})^2 a].
\end{align*}
\end{Lemma}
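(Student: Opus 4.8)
The plan is to prove the two bounds \eqref{eqn:boundX}--\eqref{eqn:boundY} simultaneously by induction on $n$, carrying along an auxiliary estimate for the functions $\widehat{Z}^{(n)}$. Concretely, I will show that for every $n\ge 0$ and $x\in(0,a]$
\[
|\widehat{Z}^{(n)}(x)| \le \frac{c_1 M_Z}{n+2\kappa+1}\,M^n x^{n+2\kappa+1},
\]
and that this, together with \eqref{eqn:boundX}--\eqref{eqn:boundY} at level $n$, yields \eqref{eqn:boundX}--\eqref{eqn:boundY} at level $n+1$. The estimate for $\widehat{Z}^{(n)}$ follows directly from \eqref{eqn:Znhat}: one inserts $|\widehat{X}^{(n)}(t)|,|\widehat{Y}^{(n)}(t)|\le c_1 M^n t^{n+\kappa}$ (for $n=0$ one uses instead the a priori bounds $|f(t)|\le c_1 t^{\kappa}$, $|g(t)|\le c_1 t^{\kappa+1}$), notes that $|f r_{11}+g r_{21}|$ and $|f r_{12}+g r_{22}|$ are each $\le c_1 c_3(1+\tilde a)t^{\kappa}$, and integrates $\int_0^x t^{n+2\kappa}\,dt$; the squared factor $(1+\tilde a)^2$ built into $M_Z$ is exactly what is needed in the case $n=0$, where both factors under the integral contribute a $(1+t)$. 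The key structural point is that the extra power $x^{2\kappa+1}$ produced here is precisely what compensates the singular weights $p_2/f^2=O(x^{-2\kappa})$ and $1/f=O(x^{-\kappa})$ appearing in \eqref{eqn:Xnhat} and \eqref{eqn:Ynhat}.

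For the inductive step, $\widehat{X}^{(n+1)}$ is estimated from \eqref{eqn:Xnhat} by bounding the three summands of the integrand separately: $|r_{21}/f|\,|\widehat{X}^{(n)}|$ and $|r_{22}/f|\,|\widehat{Y}^{(n)}|$ are each $\le c_1 c_2 c_3 M^n t^{n}$, while $|p_2/f^2|\,|\widehat{Z}^{(n)}|\le \frac{c_1 c_2^2 M_Z}{n+2\kappa+1}M^n t^{n+1}$; integrating ($\int_0^x t^n\,dt=x^{n+1}/(n+1)$, $\int_0^x t^{n+1}\,dt=x^{n+2}/(n+2)$), multiplying by $(n+1)|f(x)|\le(n+1)c_1 x^{\kappa}$ so that the explicit factor $(n+1)$ cancels, and reducing $x^{n+2+\kappa}\le a\,x^{n+1+\kappa}$, one arrives at $|\widehat{X}^{(n+1)}(x)|\le c_1 M_X M^n x^{n+1+\kappa}$ — this is exactly the inequality that dictates the value of $M_X$ — and since $c_1 c_2\tilde a\ge 1$ one has $M_X\le M$, hence $|\widehat{X}^{(n+1)}(x)|\le c_1 M^{n+1}x^{n+1+\kappa}$. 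Finally $\widehat{Y}^{(n+1)}$ is read off from \eqref{eqn:Ynhat}: the term $(n+1)\widehat{Z}^{(n)}/f$ is bounded, using $(n+1)/(n+2\kappa+1)\le 1$, by $c_1 c_2 M_Z M^n x^{n+1+\kappa}$, and the term $(g/f)\widehat{X}^{(n+1)}$ by $c_1^2 c_2\tilde a M_X M^n x^{n+1+\kappa}$ (here $|g/f|\le c_1 c_2 x\le c_1 c_2\tilde a$, together with the bound for $\widehat{X}^{(n+1)}$ just obtained); their sum equals exactly $c_1(c_2 M_Z+c_1 c_2\tilde a M_X)M^n x^{n+1+\kappa}=c_1 M^{n+1}x^{n+1+\kappa}$, closing the induction. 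The base case $n=1$ is obtained by running these same computations with $\widehat{X}^{(0)}=f$, $\widehat{Y}^{(0)}=g$ in place of the inductive bounds.

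I expect the only genuinely delicate point — everything else being routine, if somewhat lengthy, bookkeeping — to be ensuring that the induction yields a purely geometric bound $M^n$ rather than, say, an $n!\,M^n$ growth. This forces one to retain the denominators $n+1$, $n+2$ and $n+2\kappa+1$ in the intermediate estimates (in particular, to state the auxiliary $\widehat{Z}^{(n)}$ bound with the factor $1/(n+2\kappa+1)$ and to use the exact value $x^{n+1}/(n+1)$ of $\int_0^x t^n\,dt$) long enough to absorb the explicit multiplier $(n+1)$ appearing in \eqref{eqn:Xnhat} and \eqref{eqn:Ynhat}. The assumption $\kappa\ge \frac12$ is used only through $n+2\kappa+1\ge\max\{n+1,\,2\}$, and the definitions \eqref{eqn:c1}--\eqref{eqn:atilde} only through the elementary inequalities $c_1,c_2,c_3\ge 1$, $\tilde a\ge 1$ and $\tilde a\ge a$, invoked repeatedly when comparing the various products of constants so as to check $M_Z$, $M_X$ and $M$ are chosen large enough.
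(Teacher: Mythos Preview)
Your proposal is correct and follows essentially the same route as the paper: induction on $n$, carrying the auxiliary bound $|\widehat{Z}^{(n)}(x)|\le c_1 M_Z M^n x^{n+2\kappa+1}/(n+2\kappa+1)$, then estimating $\widehat{X}^{(n+1)}$ via \eqref{eqn:Xnhat} to get $\le c_1 M_X M^n x^{n+1+\kappa}$, and finally combining the two terms in \eqref{eqn:Ynhat} to close at $c_1 M^{n+1}x^{n+1+\kappa}$. Two minor remarks: your phrase ``this is exactly the inequality that dictates the value of $M_X$'' is slightly off---for $n\ge 1$ you actually obtain the tighter constant $2c_1c_2c_3+c_1c_2^2 M_Z a\le M_X$, the full $(1+\tilde a)$ in $M_X$ being needed only in the base case where $|\widehat Y^{(0)}|=|g|\le c_1 t^{\kappa+1}$; and the claim that $c_3\ge 1$ is used is inaccurate (definition \eqref{eqn:c3} does not force this), but your argument never actually invokes it.
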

\begin{proof}
The proof is by induction. First, we verify directly that the formula holds for \( n=1. \) Indeed,
\begin{align*}
\left\vert \widehat{Z}^{(0)}(x) \right\vert &\leq \int_{0}^{x}\left[c_1 t^\kappa (c_1c_3t^\kappa+c_1c_3t^{\kappa+1})+c_1 t^{\kappa+1} (c_1c_3t^\kappa+c_1c_3t^{\kappa+1})  \right]dt \\
&\leq  c_1^2c_3 \int_{0}^{x} \left[ t^{2\kappa}(1+a)+at^{2\kappa}(1+a) \right] dt  \\
&= c_1^2c_3(1+a)^2 \frac{x^{2\kappa+1}}{2\kappa+1}.
\end{align*}
Then,
\begin{align*}
\left\vert \widehat{X}^{(1)}(x) \right\vert &\leq c_1 x^\kappa \int_{0}^{x} \left[ c_1c_2c_3+c_1c_2c_3a+c_1^2c_2^2c_3(1+a)^2a \right]dt \\
&\leq c_1 \left( c_1c_2c_3[(1+a)+c_1c_2(1+a)^2a] \right) x^{\kappa+1}.
\end{align*}
Summarizing, it holds that
\begin{align*}
\left\vert \widehat{Z}^{(0)}(x) \right\vert &\leq c_1 M_Z \frac{x^{2\kappa+1}}{2\kappa+1}, \\
\left\vert \widehat{X}^{(1)}(x) \right\vert &\leq c_1 M_X x^{\kappa+1}.
\end{align*}
Finally,
\begin{align*}
\left\vert \widehat{Y}^{(1)}(x) \right\vert &\leq c_1\left[c_2M_Z+c_1c_2\tilde{a}M_X \right]x^{\kappa+1}=c_1Mx^{\kappa+1}.
\end{align*}
Noting that \( M>M_X \), the result holds for \( n=1 \). Now, assume that \( n\geq 1 \) and that \eqref{eqn:boundX} and \eqref{eqn:boundY} are valid. Then
\begin{align*}
\left\vert \widehat{Z}^{(n)}(x) \right\vert & \leq c_1 M^n \int_{0}^{x} t^{2\kappa+n}(c_1c_2)(1+\tilde{a})+t^{2\kappa+n}(c_1c_2)(1+\tilde{a})\tilde{a} dt \\
&=c_1 M^n M_Z \frac{x^{2\kappa+n+1}}{2\kappa+n+1}.
\end{align*}
Using the bound for \(  \widehat{Z}^{(n)}(x) \) we also see that
\begin{align*}
\left\vert \widehat{X}^{(n+1)}(x) \right\vert & \leq (n+1)c_1x^k\int_{0}^{x}\left[ c_2c_3t^{-\kappa}(c_1M^nt^{n+\kappa})+c_2c_3t^{-\kappa}(c_1M^nt^{n+\kappa})\tilde{a}+c_2^2t^{-2\kappa}(c_1M^nM_Zt^{2\kappa_+n}a) \right] dt \\
&=c_1M^nM_Xx^{n+\kappa+1}.
\end{align*}
Therefore,
\begin{align*}
\left\vert \widehat{Y}^{(n+1)}(x) \right\vert & \leq (n+1)c_2x^{-\kappa}\left( c_1M^nM_Z \frac{x^{2\kappa+n+1}}{2\kappa+n+1} \right)+c_1c_2\tilde{a}x\left( c_1M^nM_X x^{n+\kappa+1} \right)\\
&\leq c_1 M^{n+1}x^{n+\kappa+1}
\end{align*}
Since \( M>M_X \), it follows that also \(\left\vert \widehat{X}^{(n)}(x) \right\vert \leq c_1 M^n x^{n+\kappa} \) holds true.
\end{proof}

The following theorem presents the spectral parameter power series (SPPS) representation of a regular solution of eq. \eqref{eqn:dirac}.

\begin{Thm}\label{thm:spps}
Suppose that the homogeneous system \eqref{eqn:dirachsys} possesses a solution \(Y_0 = (f,g)^T \) such that \( f\) and \( g \) satisfy the asymptotic relations \eqref{eqn:asymfg} and \( f \) does not vanish except at \( x=0 \). Then, a regular solution of the system \eqref{eqn:dirac} is given by
\begin{equation}\label{eqn:spps}
\begin{pmatrix}
u \\
v
\end{pmatrix} = \sum_{n=0}^{\infty} \frac{\lambda^n}{n!} \begin{pmatrix}
\widehat{X}^{(n)} \\
\widehat{Y}^{(n)}
\end{pmatrix}
\end{equation}
\end{Thm}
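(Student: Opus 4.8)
The plan is to identify the coefficients in \eqref{eqn:spps} with the unique solutions of the recursive systems \eqref{eqn:diracrec} produced by Lemmas~\ref{lemma:nhdirac} and~\ref{lemma:rfgdirac}, then to establish absolute and uniform convergence of the resulting series via the estimates of Lemma~\ref{lemma:bounds}, and finally to pass to the limit in the partial sums. I set $u_n:=\frac1{n!}\widehat X^{(n)}$ and $v_n:=\frac1{n!}\widehat Y^{(n)}$. For $n=0$ this is $(u_0,v_0)^T=(f,g)^T$, which by hypothesis solves \eqref{eqn:dirachsys} and satisfies \eqref{eqn:asymfg}. For $n=1$, comparing the defining formulas \eqref{eqn:Znhat}--\eqref{eqn:Ynhat} at $n=0$ with \eqref{eqn:u1}--\eqref{eqn:v1} shows that $(u_1,v_1)^T$ coincides with the unique solution of \eqref{eqn:diracrec} for $n=1$ furnished by Lemma~\ref{lemma:rfgdirac}, and in particular satisfies \eqref{eqn:asymuv1}. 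For $n\ge2$, the inductive hypothesis gives $u_{n-1},v_{n-1}=O(x^{\kappa+1})$ (via \eqref{eqn:asymuv1} and \eqref{eqn:asymuvn}) and continuity on $[0,a]$, so $h_1:=r_{11}u_{n-1}+r_{12}v_{n-1}$ and $h_2:=r_{21}u_{n-1}+r_{22}v_{n-1}$ lie in $C([0,a])$ and are $O(x^{\kappa+1})$; Lemma~\ref{lemma:nhdirac} then applies, and substituting $h_1=\frac1{(n-1)!}\bigl(r_{11}\widehat X^{(n-1)}+r_{12}\widehat Y^{(n-1)}\bigr)$, $h_2=\frac1{(n-1)!}\bigl(r_{21}\widehat X^{(n-1)}+r_{22}\widehat Y^{(n-1)}\bigr)$ into \eqref{eqn:uif}--\eqref{eqn:vif} and recognizing the inner integral as $\frac1{(n-1)!}\widehat Z^{(n-1)}$ identifies that unique solution with $\frac1{n!}\bigl(\widehat X^{(n)},\widehat Y^{(n)}\bigr)^T$, which therefore satisfies \eqref{eqn:asymuvn}. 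This closes the induction, so $(u_n,v_n)^T$ solves \eqref{eqn:diracrec} for every $n\ge0$.

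Next I would establish convergence. By Lemma~\ref{lemma:bounds}, for $x\in[0,a]$,
\[
\sum_{n=1}^\infty\frac{|\lambda|^n}{n!}\,\bigl|\widehat X^{(n)}(x)\bigr|\le c_1x^\kappa\sum_{n=1}^\infty\frac{(|\lambda|Mx)^n}{n!}=c_1x^\kappa\bigl(e^{|\lambda|Mx}-1\bigr)\le c_1a^\kappa\bigl(e^{|\lambda|Ma}-1\bigr),
\]
and the same bound holds for $\widehat Y^{(n)}$; since $\widehat X^{(0)}=f$ and $\widehat Y^{(0)}=g$ are continuous on $[0,a]$, both series in \eqref{eqn:spps} converge absolutely and uniformly on $[0,a]$ to continuous functions $u$ and $v$. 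Put $S^u_N:=\sum_{n=0}^N\lambda^nu_n$ and $S^v_N:=\sum_{n=0}^N\lambda^nv_n$. Summing \eqref{eqn:diracrec} over $0\le n\le N$ (the $n=0$ terms drop out because $(f,g)^T$ solves \eqref{eqn:dirachsys}) yields
\[
\begin{cases}
(S^v_N)'+p_1S^u_N+\bigl(\tfrac{\kappa}{x}+q\bigr)S^v_N=\lambda\bigl(r_{11}S^u_{N-1}+r_{12}S^v_{N-1}\bigr),\\
-(S^u_N)'+\bigl(\tfrac{\kappa}{x}+q\bigr)S^u_N+p_2S^v_N=\lambda\bigl(r_{21}S^u_{N-1}+r_{22}S^v_{N-1}\bigr).
\end{cases}
\]
On any $[x_0,a]\subset(0,a]$ the coefficients are bounded, so solving these identities for $(S^u_N)'$ and $(S^v_N)'$ shows that the derivative sequences converge uniformly on $[x_0,a]$; combined with $S^u_N\to u$ and $S^v_N\to v$ uniformly, the standard theorem on differentiation of uniformly convergent sequences gives that $u,v$ are differentiable on $(0,a]$ and satisfy \eqref{eqn:dirac} there.

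It remains to check that $(u,v)^T$ is a regular solution. Writing $u=f+\sum_{n\ge1}\lambda^nu_n$, the tail is $O(x^{\kappa+1})$ by the estimate above, so $u(x)=f(x)+O(x^{\kappa+1})\sim x^\kappa$ as $x\to0$. Similarly $v=g+\lambda v_1+\sum_{n\ge2}\lambda^nv_n$, with $g\sim\frac{-p_1(0)}{2\kappa+1}x^{\kappa+1}$ by \eqref{eqn:asymfg}, $v_1\sim\frac{r_{11}(0)}{2\kappa+1}x^{\kappa+1}$ by Lemma~\ref{lemma:rfgdirac}, and $\sum_{n\ge2}\lambda^nv_n=O(x^{\kappa+2})$, hence $v(x)\sim\frac{\lambda r_{11}(0)-p_1(0)}{2\kappa+1}x^{\kappa+1}=\mu x^{\kappa+1}$ (to be read as $v=o(x^{\kappa+1})$ when $\mu=0$). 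Finally, inserting $u\sim x^\kappa$ and $v=O(x^{\kappa+1})$ into the equations \eqref{eqn:dirac} already established gives $-u'=-\kappa x^{\kappa-1}[1+o(1)]+O(x^\kappa)$, so $u'\sim\kappa x^{\kappa-1}$, and $v'=\bigl[\lambda r_{11}(0)-p_1(0)-\kappa\mu\bigr]x^\kappa[1+o(1)]=(\kappa+1)\mu x^\kappa[1+o(1)]$. Thus the series \eqref{eqn:spps} is a regular solution of \eqref{eqn:dirac}.

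The step I expect to require the most care is the bookkeeping in the first paragraph: verifying in full that the one-integration recursions \eqref{eqn:Znhat}--\eqref{eqn:Ynhat} reproduce, step by step, the nested-integral solution formulas \eqref{eqn:uif}--\eqref{eqn:vif} of Lemma~\ref{lemma:nhdirac}, with the exceptional case $n=1$ covered instead by Lemma~\ref{lemma:rfgdirac}. A secondary subtlety is that term-by-term differentiation of the series is legitimate only away from $x=0$, which is why the derivatives of the partial sums must be controlled through the differential equation itself rather than through direct bounds on $(\widehat X^{(n)})'$ and $(\widehat Y^{(n)})'$.
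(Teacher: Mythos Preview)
Your proof is correct and follows essentially the same route as the paper: identify the formal powers with the recursive solutions produced by Lemmas~\ref{lemma:rfgdirac} and~\ref{lemma:nhdirac}, invoke the bounds of Lemma~\ref{lemma:bounds} for uniform convergence, differentiate termwise, and read off the asymptotics. The one minor refinement is that you justify termwise differentiation by controlling the derivatives of the partial sums through the differential equation on compact subsets of $(0,a]$, whereas the paper simply asserts uniform convergence of the derivative series; your version is slightly more careful but not a different argument.
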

\begin{proof}
By Lemma \ref{lemma:bounds} the series converges uniformly on any compact subset \( K \subset (0,a] \) as well as the series of termwise derivatives. Hence, we can apply the Dirac operator \( B\frac{d}{dx}+P \) termwise to this series. Similarly to the proof of \cite[Theorem 2.2]{Nelson}, using eqs. \eqref{eqn:Xnhat} and \eqref{eqn:Ynhat} and Lemmas \ref{lemma:rfgdirac} and \ref{lemma:nhdirac} we argue that the functions \( \widehat{X}^{(n)}, \widehat{Y}^{(n)}, n\geq 0 \) satisfy
\[
B \frac{d}{dx} \begin{pmatrix} \widehat{X}^{(n)} \\ \widehat{Y}^{(n)} \end{pmatrix}
+P(x) \begin{pmatrix} \widehat{X}^{(n)} \\ \widehat{Y}^{(n)} \end{pmatrix}
= n \cdot R(x) \begin{pmatrix} \widehat{X}^{(n-1)} \\ \widehat{Y}^{(n-1)} \end{pmatrix}
\]
Consequently,
\[\left( B \frac{d}{dx} +P \right) \begin{pmatrix}
u \\
v
\end{pmatrix} = \sum_{n=0}^{\infty} \frac{\lambda^n}{n!} \cdot n R  \begin{pmatrix} \widehat{X}^{(n-1)} \\ \widehat{Y}^{(n-1)} \end{pmatrix}
 = \lambda R \sum_{n=1}^{\infty} \frac{\lambda^{n-1}}{(n-1)!} \begin{pmatrix} \widehat{X}^{(n-1)} \\ \widehat{Y}^{(n-1)} \end{pmatrix}
= \lambda R \begin{pmatrix}
u \\
v
\end{pmatrix}.
\]
In the series above \( \widehat{X}^{(-1)}, \, \widehat{Y}^{(-1)} \equiv 0 \).
We are just left to justify why such solution is regular.
Again, by Lemma \ref{lemma:bounds} we see that
\begin{align*}
u(x)&=x^{\kappa}+o(x^{\kappa}), \\
v(x)&= \mu x^{\kappa+1} + o(x^{\kappa+1}),
\end{align*}
Consequently, the series of formal powers satisfy asymptotics \eqref{eqn:asymu} and \eqref{eqn:asymv}.
\end{proof}

\subsection{Spectral shift technique}\label{sec:shift}

The SPPS representation of the regular solution of the Dirac equation is based on a particular solution of eq. \eqref{eqn:dirac} for \( \lambda =0 \). For the classical Sturm-Liouville equation, the spectral shift technique was introduced in \cite{SPPS}. Since then, the spectral shift technique has been successfully applied to improve the numerical accuracy in numerous publications of Sturm-Liouville problems, see e.g. \cite{Castillo, Castillo2015, KRAVCHENKO201482, Nelson} and the references therein. \\

Recently, the spectral shift technique for the Dirac equation was introduced in \cite{Nelson}. We recall the procedure here for readability. First, for a symmetric matrix \( R \), assume there exists a particular solution \( (u,v)^T \) of eq. \eqref{eqn:dirac} for \( \lambda = \lambda_0 \) such that \( u \) does not vanish except at \( x=0 \). If \(p_1(0)-\lambda_0 r_{11}(0) \neq 0, \) we can subtract \( \lambda_0 R(x) Y(x) \) from both sides of eq. \eqref{eqn:dirac} to rewrite the system as
\begin{equation}\label{eqn:diracshift}
B \frac{dY}{dx} + \left( P(x) - \lambda_0R(x) \right)Y = (\lambda-\lambda_0) R(x) Y.
\end{equation}
Since we assume that \( R \) is a symmetric matrix, eq. \eqref{eqn:diracshift} is of the same type as \eqref{eqn:dirac} and \( (u,v)^T \) is a particular solution of eq. \eqref{eqn:diracshift} with non-vanishing first entry that corresponds to \( \lambda-\lambda_0 =0 \). Therefore, we can construct the spectral parameter power series with respect to the spectral parameter \( \Lambda:=\lambda-\lambda_0 \) using Theorem \ref{thm:spps}. \\

In the case where \( R \) is not symmetric and \(p_1(0)-\lambda_0 r_{11}(0) \neq 0, \) the change of variables
\[
Y = w(x)U, \quad w(x) = \exp \left( -\frac{\lambda_0}{2} \int_{}^{} \mathrm{tr} BR(s) ds \right),
\]
transforms system \eqref{eqn:dirac} to the system
\begin{equation}\label{eqn:diracshiftns}
B \frac{dU}{dx} + \left( P(x) - \lambda_0R(x) - \frac{\lambda_0}{2}B \, \mathrm{tr} BR(x) \right)U = (\lambda-\lambda_0) R(x) U,
\end{equation}
which is a system of type \eqref{eqn:dirac}. Therefore, if \( (u,v)^T \) is a particular solution for eq. \eqref{eqn:dirac} such that \(u \) does not vanish except at \( x=0 \), \( \frac{1}{w(x)}(u,v)^T \) is a particular solution for eq. \eqref{eqn:diracshiftns} whose first entry is non-vanishing except at \( x=0 \) and we can construct the SPPS representation with respect to \( \Lambda= \lambda - \lambda_0 \). \\

Contrary to other publications of the SPPS method, where the spectral shift technique can be applied unrestrictively, in our work there is one particular case for the parameter \(\lambda_0 \) where it cannot be applied, that is, when \(p_1(0)-\lambda_0 r_{11}(0) = 0. \) However, this is not a great inconvenience as one can take another suitable spectral shift by \( \tilde{\lambda_0}\) that is near to \( \lambda_0 \).

\subsection{Construction of a particular solution}\label{sec:particular}

In this subsection we show how to construct a particular solution of eq. \eqref{eqn:dirachsys} satisfying asymptotics \eqref{eqn:asymfg}. To do so, we rewrite the system as
\begin{equation}\label{eqn:dirachsysrw}
\begin{cases}
v'+p_1u+\frac{\kappa}{x} v= -qv \\
-u'+\frac{\kappa}{x}u = -p_2v -qu,
\end{cases}
\end{equation}
The homogeneous part of system \eqref{eqn:dirachsysrw} possesses a solution
\begin{equation}\label{eqn:sln1}
(f_0,g_0)^T = (x^\kappa, -x^{-\kappa}\int_{0}^{x}t^{2\kappa} p_1(t) dt)^T,
\end{equation}
that satisfies the asymptotics \eqref{eqn:asymfg} due to condition \ref{cdt:p10}. The system \eqref{eqn:dirachsysrw} is a particular case of \eqref{eqn:dirac} with \( \lambda =1 \). We note that \( f_0 \) does not vanish except at \( x=0 \) and Theorem \ref{thm:spps} can be applied. Moreover, since in this case we have \( r_{11} \equiv 0 \), the parameter \(\mu \) is given by \( \mu=-\frac{p_1(0)}{2\kappa+1} \) and the resulting series satisfies the asymptotics \eqref{eqn:asymfg} as desired.

\subsection{The case of a singular potential}\label{sec:singq}

We now show that it is possible to use the method to construct a solution of  system \eqref{eqn:dirachsysrw} that satisfies the asymptotics \eqref{eqn:asymfg} for a potential \( q\in C(0,a] \) with a growth bound at the origin
\begin{equation}\label{eqn:cq}
\left\vert q(x) \right\vert \leq c_q x^\alpha,
\end{equation}
for some \( c_q>0 \) and some \( -1<\alpha\leq0 \).  \\

Consider the solution \(Y_0 = (f_0,g_0)^T \) of the homogeneous part of \eqref{eqn:dirachsysrw} given by \eqref{eqn:sln1}, and consider the following recursive system of functions
\begin{align}
\widehat{X}^{(0)}_q(x)&=f_0(x)\\
\widehat{Y}^{(0)}_q(x)&=g_0(x)\\
\widehat{Z}^{(n)}_q(x)&= \int_{0}^{x} \left(- \widehat{X}^{(n)}_q(t)g_0(t)q(t)+
            \widehat{Y}^{(n)}_q(t)\left(-f_0(t)q(t)-g_0(t)p_2(t) \right) \right) dt, \label{eqn:Znq} \\
\widehat{X}^{(n+1)}_q(x) &= (n+1)f_0(x)\int_{0}^{x} \left( \frac{q(t)}{f_0(t)}\widehat{X}^{(n)}_q(t)+\frac{p_2(t)}{f_0(t)}\widehat{Y}^{(n)}_q(t)\right) dt, \label{eqn:Xnq} \\
\widehat{Y}^{(n+1)}_q(x) &= (n+1)\left( \frac{1}{f_0(x)}\widehat{Z}^{(n)}_q(x) \right)+\frac{g_0(x)}{f_0(x)}\widehat{X}^{(n+1)}_q(x), \quad n\geq 0. \label{Ynq}
\end{align}

Since \( p_2 \in C[0,a] \) there exists a constant \( c_p \) such that
\begin{equation}\label{eqn:cp}
\max_{x \in [0,a]} \left\vert p_2(x) \right\vert \leq c_p.
\end{equation}

\begin{Lemma}\label{lemma:boundsq}
Consider the solution \(Y_0 = (f_0,g_0)^T \) of the homogeneous part of \eqref{eqn:dirachsysrw} given by \eqref{eqn:sln1} for a potential with a growth condition at the origin given by \eqref{eqn:cq}. Also, suppose that the constants \( c_1, c_2, c_p \) satisfy \eqref{eqn:c1}, \eqref{eqn:c2}  and \eqref{eqn:cp}. Then the following estimates hold for the functions \( \widehat{X}^{(n)}_q, \, \widehat{Y}^{(n)}_q \) for \( n \geq 0 \)
\begin{align}
\vert \widehat{X}^{(n+1)}_q(x) \vert & \leq c_1\frac{M_q^n}{(1+\alpha)^n} x^{n(1+\alpha)+\kappa}, \label{eqn:boundXq}\\
\vert \widehat{Y}^{(n+1)}_q(x) \vert & \leq c_1\frac{M_q^n}{(1+\alpha)^n} x^{n(1+\alpha)+\kappa+1}, \label{eqn:boundYq}
\end{align}
where
\begin{align*}
M_q&:=c_2M_{Z_q}+c_1c_2M_{X_q}, \\
M_{Z_q}&:=c_1(2c_q+c_p\tilde{a}^{1-\alpha}),\\
M_{X_q}&:=c_1c_2\left( c_q+c_p \tilde{a}^{1-\alpha} \right).
\end{align*}
\end{Lemma}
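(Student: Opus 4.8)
The plan is to prove Lemma~\ref{lemma:boundsq} by induction on $n$, closely paralleling the proof of Lemma~\ref{lemma:bounds} but tracking carefully the extra power $x^\alpha$ that the singular potential $q$ contributes on each integration step. The key preliminary observation is that, by \eqref{eqn:sln1} and condition \ref{cdt:p10}, we have the pointwise bounds $|f_0(x)|\le c_1 x^\kappa$, $|g_0(x)|\le c_1 x^{\kappa+1}$, and $1/|f_0(x)| = x^{-\kappa}$ exactly (so in particular $\le c_2 x^{-\kappa}$), together with $|q(x)|\le c_q x^\alpha$ and $|p_2(x)|\le c_p$. I would first establish the base case $n=0$ by a direct computation: bound $|\widehat{Z}^{(0)}_q(x)|$ by integrating $c_1 t^\kappa\cdot c_1 t^{\kappa+1} c_q t^\alpha + c_1 t^{\kappa+1}(c_1 t^\kappa c_q t^\alpha + c_1 t^{\kappa+1}c_p)$, which after replacing powers of $t\le a\le\tilde a$ by $\tilde a$ where needed yields a bound of the form $c_1 M_{Z_q} x^{2\kappa+1+\alpha}/(2\kappa+1+\alpha)$; then bound $|\widehat{X}^{(1)}_q(x)|$ using \eqref{eqn:Xnq} and this estimate, producing $c_1 M_{X_q} x^{\kappa+1+\alpha}$ — wait, matching the stated exponent: for $n=0$ the claim reads $|\widehat{X}^{(1)}_q(x)|\le c_1 x^\kappa$ and $|\widehat{Y}^{(1)}_q(x)|\le c_1 x^{\kappa+1}$, so the base case is actually the defining bounds $|\widehat{X}^{(0)}_q|=|f_0|\le c_1 x^\kappa$, $|\widehat{Y}^{(0)}_q|=|g_0|\le c_1 x^{\kappa+1}$, and I should index the induction so that the hypothesis at level $n$ is \eqref{eqn:boundXq}--\eqref{eqn:boundYq} with $n$ replaced by $n-1$.

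For the inductive step, assuming $|\widehat{X}^{(n)}_q(x)|\le c_1 \frac{M_q^{n-1}}{(1+\alpha)^{n-1}} x^{(n-1)(1+\alpha)+\kappa}$ and $|\widehat{Y}^{(n)}_q(x)|\le c_1 \frac{M_q^{n-1}}{(1+\alpha)^{n-1}} x^{(n-1)(1+\alpha)+\kappa+1}$, I would plug these into \eqref{eqn:Znq}: the integrand is bounded, after using $|g_0 q|\le c_1 c_q t^{\kappa+1+\alpha}$, $|f_0 q|\le c_1 c_q t^{\kappa+\alpha}$, $|g_0 p_2|\le c_1 c_p t^{\kappa+1}$, by a constant times $t^{(n-1)(1+\alpha)+2\kappa+1+\alpha}$, so integration picks up the exponent $(n-1)(1+\alpha)+2\kappa+2+\alpha = n(1+\alpha)+2\kappa+1$ and a denominator $\ge (1+\alpha)n$ (this is where the factor $1/(1+\alpha)^n$ accumulates — the exponent being integrated is at least $n(1+\alpha)$ in the $x^\alpha$-contributing part, or more simply one bounds the denominator $n(1+\alpha)+2\kappa+1 \ge n(1+\alpha)$). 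Then \eqref{eqn:Xnq}, with the prefactor $f_0(x)$ of size $c_1 x^\kappa$, the $(n+1)$ factor cancelling against the new denominator, and the integrand controlled via $|q/f_0|\le c_q x^{\alpha-\kappa}$, $|p_2/f_0|\le c_p x^{-\kappa}$, gives $|\widehat{X}^{(n+1)}_q(x)|\le c_1 \frac{M_q^n}{(1+\alpha)^n} x^{n(1+\alpha)+\kappa}$; and finally \eqref{Ynq} combines the $\widehat{Z}^{(n)}_q/f_0$ term (size $\sim c_2 x^{-\kappa}\cdot x^{n(1+\alpha)+2\kappa+1}$) and the $(g_0/f_0)\widehat{X}^{(n+1)}_q$ term (size $\sim c_1 x\cdot x^{n(1+\alpha)+\kappa}$) to yield \eqref{eqn:boundYq}. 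The definition of $M_q$ as $c_2 M_{Z_q}+c_1c_2 M_{X_q}$ is exactly what makes both the $n=0$ case and this recombination close.

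The main obstacle — and the place where this lemma genuinely differs from Lemma~\ref{lemma:bounds} — is tracking the denominators $(1+\alpha)^n$ correctly and verifying that all intermediate integrals actually converge at $x=0$, since unlike the $C[0,a]$ case we no longer have $q\in C[0,a]$ and the exponent $\alpha$ can be close to $-1$. One must check at each integration that the integrand is $O(t^\beta)$ with $\beta>-1$: in $\widehat{Z}^{(0)}_q$ the worst term is $|f_0 q|\le c_1 c_q t^{\kappa+\alpha}$, and since $\kappa\ge\tfrac12$ and $\alpha>-1$ we have $\kappa+\alpha>-\tfrac12>-1$, so convergence holds; in $\widehat{X}^{(1)}_q$ the worst term is $|q/f_0|\le c_q t^{\alpha-\kappa}$, and $\alpha-\kappa$ could be as negative as just above $-1-\kappa$, which is problematic — so one must instead rely on the $\widehat{Z}^{(n)}_q$-term inside \eqref{eqn:Xnq}, namely $|p_2/f_0^2|\cdot|\widehat{Z}^{(n)}_q|\le c_p t^{-2\kappa}\cdot c_1 M_{Z_q} t^{2\kappa+1+\alpha}/(\cdots) = O(t^{1+\alpha})$ with $1+\alpha>0$, and for the $q/f_0$ term note it multiplies $\widehat{X}^{(n)}_q = f_0(\cdots)$, i.e. the actual integrand in \eqref{eqn:Xnq} is $\frac{q}{f_0}\widehat{X}^{(n)}_q$ where for $n\ge 1$, $\widehat{X}^{(n)}_q$ carries a factor vanishing like $x^{(n-1)(1+\alpha)+\kappa}\cdot$(prefactor $f_0$)$\,$— so that $\frac{q}{f_0}\widehat X^{(n)}_q = O(t^{\alpha})\cdot O(t^{(n-1)(1+\alpha)})$, integrable. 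For $n=0$ one has $\widehat X^{(0)}_q = f_0$ so $\frac{q}{f_0}f_0 = q = O(t^\alpha)$, still integrable. Once these convergence checks are in place, the rest is the bookkeeping of constants described above, and the final line "$M_q > M_{X_q}$" (or the analogous comparison) lets one conclude the bound \eqref{eqn:boundXq} holds with the clean constant $M_q^n/(1+\alpha)^n$.
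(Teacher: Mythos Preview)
Your approach is correct and matches the paper's, which simply states ``The proof is similar to that of Lemma~\ref{lemma:bounds}.''  One small correction: formula~\eqref{eqn:Xnq} contains \emph{no} $\widehat{Z}^{(n)}_q$-term --- the $p_2/f_0^2$ contribution present in the general recipe~\eqref{eqn:Xnhat} vanishes here because in the homogeneous part of~\eqref{eqn:dirachsysrw} the coefficient of $v$ in the second equation is zero --- so your detour through that term is unnecessary; your subsequent observation that $\tfrac{q}{f_0}\,\widehat{X}^{(n)}_q = O(t^{\alpha+(n-1)(1+\alpha)})$ (and $=O(t^\alpha)$ for $n=0$) is integrable is exactly what is needed to close the induction.
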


\begin{proof}
The proof is similar to that of Lemma \ref{lemma:bounds}.
\end{proof}
The following result follows directly from the bounds of Lemma \ref{lemma:boundsq}.
\begin{Prop}\label{prop:fqgq}
Consider the solution \(Y_0 = (f_0,g_0)^T \) of the homogeneous part of \eqref{eqn:dirachsysrw} given by \eqref{eqn:sln1} for a potential with a growth condition at the origin given by \eqref{eqn:cq}. Then, the solution of \eqref{eqn:dirachsys} that satisfies the asymptotics \eqref{eqn:asymfg} is given by
\begin{equation}\label{eqn:fqgq}
\begin{pmatrix}
f_q \\
g_q
\end{pmatrix} = \sum_{n=0}^{\infty} \frac{1}{n!} \begin{pmatrix}
\widehat{X}^{(n)}_q \\
\widehat{Y}^{(n)}_q
\end{pmatrix}
\end{equation}
\end{Prop}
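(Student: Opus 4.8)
Rewrite \eqref{eqn:dirachsys} in the form \eqref{eqn:dirachsysrw} and then in matrix form as $B\frac{dY}{dx}+\widetilde{P}Y=\widetilde{R}Y$, where
\[
\widetilde{P}=\begin{pmatrix} p_1 & \kappa/x \\ \kappa/x & 0\end{pmatrix},\qquad \widetilde{R}=\begin{pmatrix} 0 & -q \\ -q & -p_2\end{pmatrix},
\]
so that $\widetilde{P}-\widetilde{R}=P$ is the coefficient matrix of \eqref{eqn:dirachsys}. This is a system of the type \eqref{eqn:dirac} with the spectral parameter frozen at the value $1$: its solution at the value $0$ of the spectral parameter is $(f_0,g_0)^{T}=(\widehat{X}^{(0)}_q,\widehat{Y}^{(0)}_q)^{T}$, which indeed solves $B\frac{dY}{dx}+\widetilde{P}Y=0$, and the recursion \eqref{eqn:Znq}--\eqref{Ynq} is precisely the recursion \eqref{eqn:Znhat}--\eqref{eqn:Ynhat} on which Theorem \ref{thm:spps} is based, for these data. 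Hence, were the entry $-q$ of $\widetilde{R}$ continuous on all of $[0,a]$, this Proposition would be exactly Theorem \ref{thm:spps} at the value $1$ of the spectral parameter. Since $-q\in C(0,a]$ satisfies only the growth bound \eqref{eqn:cq}, the plan is to re-run the proof of Theorem \ref{thm:spps}, replacing Lemma \ref{lemma:bounds} by Lemma \ref{lemma:boundsq}, which is designed for exactly this kind of singular coefficient.

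First I would check convergence. All the integrals occurring in the recursion \eqref{eqn:Znq}--\eqref{Ynq} converge: the induction in Lemma \ref{lemma:boundsq} shows that each integrand is $O(t^{\alpha})$ near $t=0$, which is integrable precisely because $\alpha>-1$. By that lemma, for $n\ge1$ the $n$-th summand of \eqref{eqn:fqgq} is bounded on $(0,a]$ by $\tfrac{c_1}{n!}\tfrac{M_q^{\,n-1}}{(1+\alpha)^{n-1}}x^{(n-1)(1+\alpha)+\kappa}$, with an extra power of $x$ in the second component; since $1+\alpha>0$, the factorial $1/n!$ dominates the geometric factor $M_q^{\,n-1}/(1+\alpha)^{n-1}$, so on every compact $K\subset(0,a]$ the Weierstrass $M$-test gives uniform convergence of \eqref{eqn:fqgq}. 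Differentiating \eqref{eqn:Znq}--\eqref{Ynq} expresses the derivatives of the formal powers through $\widehat{X}^{(n)}_q,\widehat{Y}^{(n)}_q,\widehat{Z}^{(n)}_q$ with coefficients ($q$, $p_2$, $f_0'/f_0=\kappa/x$, $1/f_0=x^{-\kappa}$) that are bounded on $K$, so the termwise-differentiated series converges uniformly on $K$ as well. Thus $f_q,g_q\in C^{1}(0,a]$ and the operator $B\frac{d}{dx}+\widetilde{P}$ may be applied term by term.

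Next, the verification — carried out in the proofs of Lemma \ref{lemma:rfgdirac} (the step $n=1$) and Lemma \ref{lemma:nhdirac} ($n\ge2$) — that the formal powers solve the relevant linear systems consists of pointwise differential identities on $(0,a]$ that use neither continuity of the coefficients at $x=0$ nor the $O(x^{\kappa+1})$-assumption on the right-hand sides (the latter entered only to ensure convergence of the integrals and boundedness of the solutions, which Lemma \ref{lemma:boundsq} supplies here). Reading those identities with the present data gives, for every $n\ge0$,
\[
B\frac{d}{dx}\begin{pmatrix}\widehat{X}^{(n)}_q \\ \widehat{Y}^{(n)}_q\end{pmatrix}+\widetilde{P}\begin{pmatrix}\widehat{X}^{(n)}_q \\ \widehat{Y}^{(n)}_q\end{pmatrix}=n\,\widetilde{R}\begin{pmatrix}\widehat{X}^{(n-1)}_q \\ \widehat{Y}^{(n-1)}_q\end{pmatrix},\qquad \widehat{X}^{(-1)}_q=\widehat{Y}^{(-1)}_q\equiv 0,
\]
the case $n=0$ being the fact that $(f_0,g_0)^{T}$ solves $B\frac{dY}{dx}+\widetilde{P}Y=0$. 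Dividing by $n!$, summing, and shifting the index exactly as in the proof of Theorem \ref{thm:spps} yields $\left(B\frac{d}{dx}+\widetilde{P}\right)(f_q,g_q)^{T}=\widetilde{R}(f_q,g_q)^{T}$, that is $\left(B\frac{d}{dx}+P\right)(f_q,g_q)^{T}=0$, which is \eqref{eqn:dirachsys}.

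Finally, the asymptotics \eqref{eqn:asymfg}. The $n=0$ summand contributes $f_0(x)=x^{\kappa}$ and $g_0(x)=-x^{-\kappa}\int_0^{x}t^{2\kappa}p_1(t)\,dt$, and L'Hopital together with condition \ref{cdt:p10} gives $g_0(x)\sim-\tfrac{p_1(0)}{2\kappa+1}x^{\kappa+1}$; by Lemma \ref{lemma:boundsq} every summand with $n\ge1$ carries, relative to the $n=0$ term, at least a factor $x^{1+\alpha}$ (one integration over $[0,t]$ of an $O(t^{\alpha})$-integrand produces $O(t^{1+\alpha})$), so $\sum_{n\ge1}\tfrac{1}{n!}\widehat{X}^{(n)}_q(x)=o(x^{\kappa})$ and $\sum_{n\ge1}\tfrac{1}{n!}\widehat{Y}^{(n)}_q(x)=o(x^{\kappa+1})$ because $1+\alpha>0$; hence $(f_q,g_q)^{T}$ satisfies \eqref{eqn:asymfg}, and in particular $f_q$ does not vanish near the origin. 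Uniqueness of a solution with the normalization \eqref{eqn:asymfg} follows from the structure of \eqref{eqn:dirachsys} at the regular singular point $x=0$: any two such solutions differ by a solution that is $o(x^{\kappa})$, which is possible only for the trivial solution. I expect the only real obstacle to be the bookkeeping around $x=0$ — spelling out that the bound \eqref{eqn:cq} with $\alpha>-1$ is exactly what keeps every integrand in \eqref{eqn:Znq}--\eqref{Ynq} integrable there, and that the factor $1/(1+\alpha)$ generated at each integration step is still absorbed by $1/n!$ — but both points are already packaged into Lemma \ref{lemma:boundsq}, so the remainder of the argument is a faithful copy of the proof of Theorem \ref{thm:spps}.
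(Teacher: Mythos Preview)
Your proposal is correct and follows the same route as the paper, which simply records that the Proposition ``follows directly from the bounds of Lemma \ref{lemma:boundsq}''; you have written out in full the argument that this sentence summarizes, namely re-running the proof of Theorem \ref{thm:spps} with Lemma \ref{lemma:boundsq} in place of Lemma \ref{lemma:bounds}. One small caution on the asymptotics step: as stated, Lemma \ref{lemma:boundsq} bounds $\widehat{X}^{(1)}_q$ only by $c_1x^{\kappa}$ (and $\widehat{Y}^{(1)}_q$ by $c_1x^{\kappa+1}$), so the claim that the $n=1$ summand already gains a factor $x^{1+\alpha}$ does not come from the Lemma itself but from the direct computation you sketch in the parenthetical---that point deserves to be made explicit.
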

We can now generalize Theorem \ref{thm:spps} to obtain the SPPS representation for a system with a singular potential.
\begin{Thm}
Consider the Dirac system \eqref{eqn:dirac} with a potential that satisfies the growth condition at the origin \eqref{eqn:cq}. Let \(Y_{0,q} = (f_q,g_q)^T \) be the solution of the homogeneous system \eqref{eqn:dirachsys} given by Proposition \ref{prop:fqgq}. Assume that \( f_q \) does not vanish except at \( x=0 \).  Then, the regular solution of the system \eqref{eqn:dirac} is given by \eqref{eqn:spps}.
\end{Thm}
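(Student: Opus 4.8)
The plan is to mirror the proof of Theorem \ref{thm:spps}, substituting the singular-potential analogues of Lemmas \ref{lemma:nhdirac} and \ref{lemma:rfgdirac} and the convergence bounds of Lemma \ref{lemma:boundsq} for their continuous-potential counterparts. First I would record that, by hypothesis, $(f_q,g_q)^T$ given by Proposition \ref{prop:fqgq} is a genuine solution of the homogeneous system \eqref{eqn:dirachsys}, it satisfies the asymptotics \eqref{eqn:asymfg}, and $f_q$ is non-vanishing except at $x=0$; these are exactly the structural hypotheses under which Lemmas \ref{lemma:nhdirac} and \ref{lemma:rfgdirac} were proved. A key point to emphasize is that those two lemmas were established \emph{without any additional smoothness assumption on $q$} beyond what is needed for the integrals to make sense: the smoothness of $q$ was used only in the preliminary step to derive the decoupled perturbed Bessel equations, and the final verification that \eqref{eqn:uif}--\eqref{eqn:vif} (resp.\ \eqref{eqn:u1}--\eqref{eqn:v1}) solve the system was done directly, by substitution, for arbitrary continuous $q$. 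The same direct verification goes through verbatim for $q\in C(0,a]$ with the growth bound \eqref{eqn:cq}, since $\bigl(\tfrac{\kappa}{x}+q\bigr)h_j$ and $\bigl(\tfrac{\kappa}{x}+q\bigr)\tfrac{r_{ij}g_q}{f_q}$ remain $O(x^{\kappa-1})$ integrably near the origin; this is precisely the reason the remark following the conditions allows weakening \ref{cdt:prq} for $q$.

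Next I would define the formal powers $\widehat{X}^{(n)},\widehat{Y}^{(n)}$ by \eqref{eqn:Znhat}--\eqref{eqn:Ynhat} but with $(f,g)$ replaced by $(f_q,g_q)$, and check that all integrals occurring in the recursion converge at $x=0$. The only delicate integral is the inner one in \eqref{eqn:Znhat} and the $\tfrac{1}{f_q}\widehat{Z}^{(n)}$ term in \eqref{eqn:Ynhat}: using $|f_q(x)|\le \tilde c_1 x^\kappa$, $|g_q(x)|\le\tilde c_1 x^{\kappa+1}$ and $|1/f_q(x)|\le \tilde c_2 x^{-\kappa}$, exactly as in Lemma \ref{lemma:bounds}, one sees the integrands are $O(x^{2\kappa})$ and $O(x^{\kappa})$ respectively, hence integrable since $\kappa\ge\tfrac12$. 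Then I would invoke Lemma \ref{lemma:boundsq}: the bounds \eqref{eqn:boundXq}--\eqref{eqn:boundYq} give $|\widehat{X}^{(n)}(x)|, |\widehat{Y}^{(n)}(x)|\le c_1 \tfrac{M_q^{\,n-1}}{(1+\alpha)^{n-1}} x^{(n-1)(1+\alpha)+\kappa}$, so $\sum \lambda^n\widehat{X}^{(n)}/n!$ and $\sum\lambda^n\widehat{Y}^{(n)}/n!$, together with their termwise derivatives, converge uniformly on every compact $K\subset(0,a]$ for every $\lambda\in\mathbb C$.

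With convergence in hand, the argument of Theorem \ref{thm:spps} carries over unchanged: the recursion \eqref{eqn:Xnhat}--\eqref{eqn:Ynhat}, read through Lemmas \ref{lemma:nhdirac} and \ref{lemma:rfgdirac} (in their singular-$q$ form), shows
\[
B\frac{d}{dx}\begin{pmatrix}\widehat{X}^{(n)}\\ \widehat{Y}^{(n)}\end{pmatrix}+P(x)\begin{pmatrix}\widehat{X}^{(n)}\\ \widehat{Y}^{(n)}\end{pmatrix}=n\,R(x)\begin{pmatrix}\widehat{X}^{(n-1)}\\ \widehat{Y}^{(n-1)}\end{pmatrix},
\]
and applying $B\tfrac{d}{dx}+P$ termwise to \eqref{eqn:spps} and reindexing yields $(B\tfrac{d}{dx}+P)(u,v)^T=\lambda R(u,v)^T$. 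Finally, the $n=0$ term reproduces $(f_q,g_q)$, which has asymptotics \eqref{eqn:asymfg}, and by the bounds above the $n\ge1$ terms contribute $O(x^{(1+\alpha)+\kappa})=o(x^\kappa)$ to $u$ and $o(x^{\kappa+1})$ to $v$, so \eqref{eqn:asymu}--\eqref{eqn:asymv} hold with $\mu=\bigl(\lambda r_{11}(0)-p_1(0)\bigr)/(2\kappa+1)$; hence the sum is a regular solution. The main obstacle is the careful bookkeeping at $x=0$ in the first step — confirming that the proofs of Lemmas \ref{lemma:nhdirac} and \ref{lemma:rfgdirac} never secretly used $q\in C[0,a]$ and that every integral in the recursion still converges under the weaker bound \eqref{eqn:cq}; once that is settled, the remainder is a routine transcription of Theorem \ref{thm:spps}.
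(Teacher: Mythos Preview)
Your overall strategy matches the paper's, which simply observes that the proof of Theorem~\ref{thm:spps} goes through unchanged once one has a homogeneous solution $(f_q,g_q)^T$ with the asymptotics \eqref{eqn:asymfg} and non-vanishing first entry. Your elaboration of \emph{why} Lemmas~\ref{lemma:nhdirac} and \ref{lemma:rfgdirac} remain valid for singular $q$ is a useful amplification of what the paper leaves implicit.

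There is, however, one genuine mix-up. You invoke Lemma~\ref{lemma:boundsq} to bound the formal powers $\widehat{X}^{(n)},\widehat{Y}^{(n)}$ built from $(f_q,g_q)$ via \eqref{eqn:Znhat}--\eqref{eqn:Ynhat}. But Lemma~\ref{lemma:boundsq} concerns a \emph{different} recursive system: the functions $\widehat{X}^{(n)}_q,\widehat{Y}^{(n)}_q$ of \eqref{eqn:Znq}--\eqref{Ynq}, built from $(f_0,g_0)=(x^\kappa,\ldots)$ with the specific choice $r_{11}=0$, $r_{12}=r_{21}=-q$, $r_{22}=-p_2$, used only to construct $(f_q,g_q)$ itself. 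The bounds you need are those of Lemma~\ref{lemma:bounds}, whose proof never touches $q$ (it uses only the asymptotics of $f,g$, the non-vanishing of $f$, and the continuity of $p_2,r_{ij}$), and therefore applies verbatim with $(f,g)=(f_q,g_q)$ in the singular case. This distinction is not cosmetic: the bound you transcribed gives $|\widehat{X}^{(1)}(x)|\le c_1 x^\kappa$, which is \emph{not} $o(x^\kappa)$, so your closing asymptotic argument for regularity would not go through as written. With Lemma~\ref{lemma:bounds} one has $|\widehat{X}^{(n)}(x)|\le c_1 M^n x^{n+\kappa}$, and the $n\ge1$ terms contribute $O(x^{\kappa+1})$ as required. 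Once you swap in the correct lemma, your plan is complete and coincides with the paper's.
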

\begin{proof}
We note that the proof of Theorem \ref{thm:spps} remains valid if the homogeneous system \eqref{eqn:dirachsys} possesses a solution \( Y_0= (f, g)^T \) such that \( f \) and \( g \) satisfy the asymptotic relations \eqref{eqn:asymfg} and \( f \) does not vanish except at \( x=0 \). According to Proposition \ref{prop:fqgq}, the solution \( Y_{0,q} \) satisfies the required asymptotics, and together with the assumption for \( f_q \), taking \( Y_0=Y_{0,q} \) the result follows.
\end{proof}

\subsection{Solutions with a non-vanishing first entry}\label{sec:nonvanishing}
The existence of a non-vanishing particular solution has been established in the case of regular Sturm-Liouville problems (see \cite[Remark 5]{SPPS}). Moreover, in the case of the one-dimensional Dirac equation, the existence and construction of a non-vanishing particular solution has been proven as well using an appropriate linear combination of the two linearly independent solutions, see \cite[Proposition 2.9, Proposition 2.10]{Nelson}. However, for the radial Dirac equation there is only one solution satisfying the asymptotics \eqref{eqn:asymu} and \eqref{eqn:asymv}. \\

It is known that for real-valued coefficients \(p_{1,2} \) and real-valued potential \( q \), the Dirac operator \( B \frac{d}{dx}+P(x) \) has real spectrum. If we have that \( R \equiv I \) where \( I \) denotes the \(2\times2 \) identity matrix, a spectral shift by \( \lambda_0 \) with \( \mathrm{Im}(\lambda_0) \neq 0 \) must necessarily have a particular solution \( (\tilde{u},\tilde{v})^T \) such that \( \tilde{u} \) does not vanish except at \( x=0 \). \\

Indeed, suppose that \( (\tilde{u},\tilde{v} )^T \) is a solution of \(B\frac{d}{dx}Y+P(x)Y=\lambda_0Y \) in the interval \( (0,a] \) and there exists a point \( x_0 \in (0,a) \) such that \( \tilde{u}(x_0)=0 \). Then, we consider the following spectral problem
\begin{equation}
\begin{cases}
v'+p_1u+(\frac{\kappa}{x}+q )v=\lambda u, \qquad x\in (0,x_0] \\
-u'+(\frac{\kappa}{x}+q )u+p_2v=\lambda v, \\
u(x_0) = 0.
\end{cases}
\end{equation}
Then \( (\tilde{u},\tilde{v})^T \) is an eigenfunction of the above spectral problem corresponding to the complex eigenvalue \( \lambda=\lambda_0 \), which is impossible. \\

On the other hand, notice that due to the asymptotic relations \eqref{eqn:asymu} and \eqref{eqn:asymv} that the regular solution \((u,v)^T \) satisfies, there exists \(  0<\delta<a \) such that both \( u \) and \( v \) do not vanish in the interval \( (0,\delta] \).
Then,  we can work with the equation
\begin{equation}
\begin{cases}
v'+p_1u+(\frac{\kappa}{x}+q )v=\lambda u, \qquad x\in [\delta,a] \\
-u'+(\frac{\kappa}{x}+q )u+p_2v=\lambda v, \\
\end{cases}
\end{equation}
which is no longer singular and compute the solution of the system in the interval \( [\delta,a] \) that continuously extends the regular solution from the interval \( (0,\delta ] \) to the whole interval \( (0,a] \). For more details about the SPPS method for the Dirac equation in the one-dimensional case, see \cite{Nelson}.

\section{Numerical implementation and examples}\label{sec:numerical}

We consider the spectral problem given by the following boundary conditions
\begin{equation}\label{eqn:bc}
(\alpha_1,\alpha_2)Y(a) = 0,
\end{equation}
where \( \alpha_1, \, \alpha_2 \) are complex constants satisfying  \( \left\vert \alpha_1 \right\vert+ \left\vert \alpha_2 \right\vert \neq 0 \). The application of the SPPS representation to solve the spectral problem for the system \eqref{eqn:dirac} is similar to that of \cite{Nelson, Castillo}. The algorithm is as follows.

\begin{enumerate}
\item Find a particular solution \( (f,g )^T \) of the homogeneous system \eqref{eqn:dirachsys} satisfying the asymptotic conditions \eqref{eqn:asymfg}. If an analytic expression for the particular solution is unknown, one can use a numerical approximation with a truncated series of eq. \eqref{eqn:fqgq} described in Section \ref{sec:particular}. Notice that \eqref{eqn:fqgq} covers both the singular and continuous potential.
\item Check that \(f \) is non-vanishing for \( x\in(0,a] \). If the particular solution has zeros inside the interval \( (0,a] \), the spectral shift technique may be used finding a suitable value of \( \lambda_0 \), complex in general.
\item The solution \eqref{eqn:spps} satisfies the boundary condition if and only if the following characteristic equation is satisfied:
\[
\Delta(\lambda):= \alpha_1 u(a; \lambda) + \alpha_2 v(a;\lambda)=0.
\]
Use partial sums of the series \eqref{eqn:spps} to obtain a polynomial
\begin{equation}\label{eqn:chareq-spps}
\Delta_N(\lambda):= \alpha_1 \sum_{n=0}^{N} \frac{\lambda^n}{n!} \widehat{X}^{(n)}(a) + \alpha_2 \sum_{n=0}^{N} \frac{\lambda^n}{n!}\widehat{Y}^{(n)}(a),
\end{equation}
that approximates the characteristic function \( \Delta(\lambda) \).
\item Find the zeros of the polynomial \( \Delta_N(\lambda) \). Its roots closest to the origin approximate the exact eigenvalues. To discard spurious eigenvalues, see the discussion in \cite{Castillo}.
\item If larger eigenvalues are needed, apply several steps of the spectral shift technique. The following procedure yielded excellent results.\\

In the first step, choose a complex number \(\eta_0 = \sigma+i\tau_0  \) with \( 0< \left\vert \tau_0 \right\vert< \left\vert \sigma \right\vert \). Take a finite set of real constants \( A:=\{\beta_j \}_{j=1}^k  \), such that \( 0<\beta_1<\ldots<\beta_l=1<\beta_{l+1}<\ldots<\beta_k \). Compute the solution of
\[
B\frac{dY}{dx}+PY=\lambda Y,
\]
for each value of \( \lambda=\lambda_{1,j}\) with \( \lambda_{1,j}=\sigma+i\beta_j\tau_0, \, j=1,\ldots,k. \) Select the solution having the least value of the expression
\begin{equation}\label{eqn:min}
\max_{j=1,\ldots,k} \left\{ \left\Vert u_j(x) \right\Vert, \left\Vert v_j \right\Vert, \left\Vert \frac{x^k}{u_j(x)} \right\Vert \right\},
\end{equation}
where \( \Vert u \Vert:=\max_{x\in[0,a]} \left\vert u(x) \right\vert \) and \( (u_j,v_j)^T \) is the solution corresponding to \( \lambda_{1,j}. \) The first shift is given by \( \lambda_1:=\sigma+\gamma_1 \tau_0 \), where \( \gamma_1\in A \) is the value that minimizes \eqref{eqn:min}. \\

For the subsequent steps proceed similarly. The \( n \)-th shift is given by \( \lambda_n := n\sigma+i\tau_n^* \) where \( \tau_n^* \) is the number that has the least absolute value between \( \sigma \) and
\[
\tau_n=\gamma_n \tau_{n-1}=\gamma_n \gamma_{n-1} \ldots \gamma_1 \tau_0,
\]
and \( \gamma_n \) is chosen as the value of \( \beta_j \) that minimizes \eqref{eqn:min} for the solutions corresponding to \( \lambda_{n,j}=n\sigma +i \beta_j \tau_{n-1} \). \\

\end{enumerate}
In the numerical examples we used a uniform mesh with \(M \) points to represent all functions involved, and a Newton-Cotes 6-point rule modified to perform indefinite integration. All the computations where done in Matlab 2020b with double machine precision.

\subsection{Application to spectral problems for the perturbed Bessel equation}\label{sec:bessel}

Consider a perturbed Bessel equation of the form
\begin{equation}\label{eqn:bess}
-u''+\left(\frac{l(l+1 )}{x^2}+q_B(x)\right)u = \omega^2 r(x) u
\end{equation}
where \( q_B, \, r \in C(0,a) \) are complex-valued functions such that \( r(0)\neq 0 \), and \( l \geq -1/2 \). Suppose that \( u_0 \) is a regular solution that does not vanish except at \( x=0 \) corresponding to \( \omega=0 \). Let \( \omega \neq 0 \). We introduce the variable \( v \) by the equation
\begin{equation}
\omega v = -u' +\frac{u_0'}{u_0}u.
\end{equation}
Then, eq. \eqref{eqn:bess} is equivalent to the system,
\begin{equation}
\begin{cases}
v' + \frac{u_0'}{u_0}u &= \omega r u, \\
-u' + \frac{u_0'}{u_0}v &= \omega v.
\end{cases}
\end{equation}
We rewrite this system as
\begin{equation}\label{eqn:b2daux}
\begin{cases}
v' +ru+ \frac{u_0'}{u_0}v &= (\omega+1) r u, \\
-u' + \frac{u_0'}{u_0}u+v &= (\omega+1) v,
\end{cases}
\end{equation}
so that it satisfies the condition \ref{cdt:p10}. Moreover, for a potential \( q_B \in C(0,a) \) that satisfies the growth condition \(  q_B(x) = O( x^\beta) \) with \(  -2 < \beta \leq 0 \), using \cite[eqs. 3.5-3.7, 3.9]{Castillo} we see that
\begin{equation}
\frac{u_0'}{u_0} = \frac{l+1}{x}+O(x^{\beta+1}).
\end{equation}
We define \( q_D(x):=\frac{u_0'}{u_0}-\frac{\kappa_l}{x} \), with \( \kappa_l = l+1 \). We note that the potential \( q_D \) satisfies the growth condition at the origin \eqref{eqn:cq} for \( \alpha=\beta+1>-1 \) and we can work with this case via the results in Section \ref{sec:particular}. Then, the system \eqref{eqn:b2daux} can be written as
\begin{equation}
\begin{cases}
v' +ru+ \left( \frac{\kappa_l}{x}+q_D(x)\right)v &= (\omega+1) r u, \\
-u' + \left( \frac{\kappa_l}{x}+q_D(x)\right)u+v &= (\omega+1) v,
\end{cases}
\end{equation}
The remaining question is if we can compute a particular solution of the homogeneous part of \eqref{eqn:bess} using the Dirac system approach, or more precisely, if we can compute the function \( q_D \). To answer this question, we write
\[
v_0 = u_0'-\frac{\kappa_l}{x}u_0.
\]
After differentiation we arrive to the system
\begin{equation}
\begin{cases}
v_0' +u_0+ \frac{\kappa_l}{x}v_0 &= (q_B(x)+1)u_0\\
-u_0' + \frac{\kappa_l}{x}u_0 &=  -v_0,
\end{cases}
\end{equation}
and we take \( q_D(x):=\frac{v_0(x)}{u_0(x)} \). \\

Following \cite[Sec. 4.1]{Nelson}, we transform a spectral problem for eq. \eqref{eqn:bess} with a boundary condition of the form
\begin{equation}
\alpha_1 u(a)+ \alpha_2 u'(a)=0,
\end{equation}
to the equivalent boundary condition (which depends on the spectral parameter)
\begin{equation}
\left( \alpha_1 + \frac{\alpha_2 u_0'(a)}{u_0(a)} \right) u(a) - \omega \alpha_2 v(a) =0.
\end{equation}

\begin{Ex}\label{Ex:73}
Consider the following spectral problem
\begin{equation}
\begin{cases}
-y''+\left( \frac{\nu^2-\frac{1}{4}}{x^2}+x^2 \right)y = \lambda y, \quad 0<x\leq \pi \\
y(\pi,\lambda)=0,
\end{cases}
\end{equation}
with \( \nu=2. \) For this problem, the exact characteristic equation is known in terms of the Whittaker-M function; see \cite{Nelson}. We can compute the exact eigenvalues with any desired precision using, e.g., Mathematica. In table \ref{tab:ex73}, we present the results obtained by the SPPS method and the exact values for the first ten eigenvalues without using the spectral shift technique. \\

The accuracy of the computed eigenvalues is dependent on both parameters \(N \) and \(M \), where \(N \) is the truncation parameter that appears in eq. \eqref{eqn:chareq-spps} and \(M \) stands for the number of points used in the integration. In figure \ref{fig:err73v1}, we can see that taking a constant step size in the spectral shift technique can be quite unstable, even for small variations of the parameter \( \tau \). Although the accuracy improves with larger N and M, the results are still not satisfactory. \\

On the contrary, the adaptive step size of the spectral shift technique as described in the algorithm provides excellent results. The numerical experiments
suggest that as long as we take a not so small value of \( \tau_0 \) the
strategy provides very similar results. For a fixed parameter of \(N = 50\), increasing the number of integration points from \(M = 50000\) to \(M = 100000\) has a significant impact on the number of eigenvalues that can be computed with good accuracy, see figure \ref{fig:err73v2}. In the case of the constant step size in the spectral shift, such an impact is nowhere near as significant as with the additive step size.

\begin{table}[tb]
\centering
\begin{tabular}{cccc}
\hline
\(n \) & \( \sqrt{\lambda_n}  \text{ (Dirac) } \) &  \( \sqrt{\lambda_n} \text{(Exact)} \) & \( \text{ Abs. error }\) \\
\hline
   1    &      2.46294997397399    &      2.46294997397397  &    \(1.909\cdot 10^{-14}\)\\
   2    &      3.28835292994174    &      3.28835292994256  &    \(8.255\cdot 10^{-13}\)\\
   3    &      4.14986421876492    &      4.14986421874478  &    \(2.014\cdot 10^{-11}\)\\
   4    &      5.06366882344452    &       5.0636688237341  &    \(2.895\cdot 10^{-10}\)\\
   5    &        6.007581461806    &        6.007581458116  &    \(3.690\cdot 10^{-09}\)\\
   6    &      6.96849408294089    &      6.96849412676994  &    \(4.382\cdot 10^{-08}\)\\
   7    &       7.9397377451917    &       7.9397373768993  &    \(3.682\cdot 10^{-07}\)\\
   8    &      8.91769491485358    &       8.9176967111442  &    \(1.796\cdot 10^{-06}\)\\
   9    &      9.90029802733174    &      9.90026271201188  &    \(3.531\cdot 10^{-05}\)\\
  10    &      10.8845351338648    &      10.8861250916173  &    \(1.589\cdot 10^{-03}\)\\
\hline
\end{tabular}
\caption{First ten eigenvalues of Example \ref{Ex:73} withouth any spectral shift.}
\label{tab:ex73}
\end{table}

\begin{figure}[h]
\centering
 \includegraphics[width=6in,height=4in]{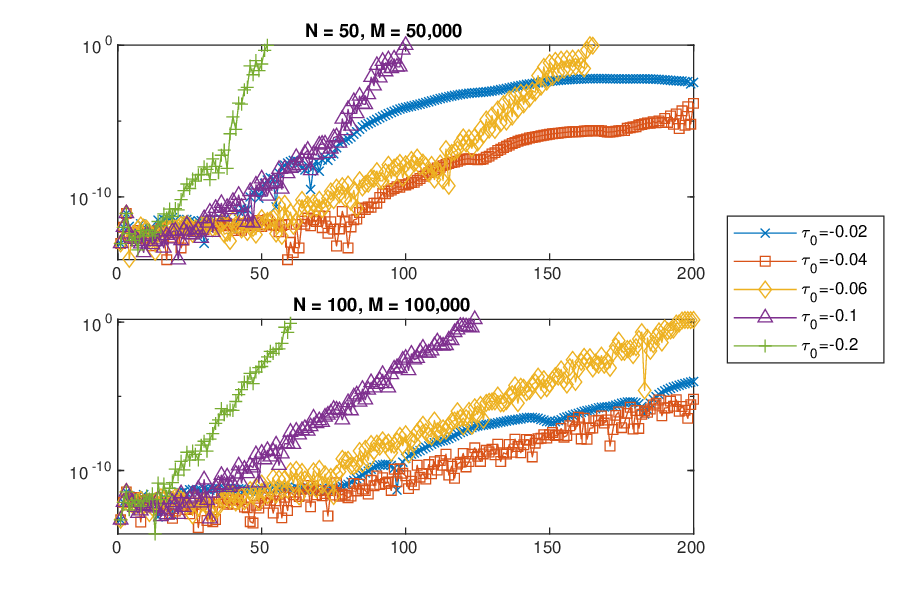}
\caption{Absolute error of \( \sqrt{\lambda_n} \) of Example \ref{Ex:73} using the spectral shift technique with a constant step size \( \lambda_0^{(n)}=n \sigma+ n \tau_0 i   \) for different values of \( \tau_0, \, \sigma=-2 \).}
\label{fig:err73v1}
\end{figure}

\begin{figure}
\centering
 \includegraphics[width=6in,height=4in]{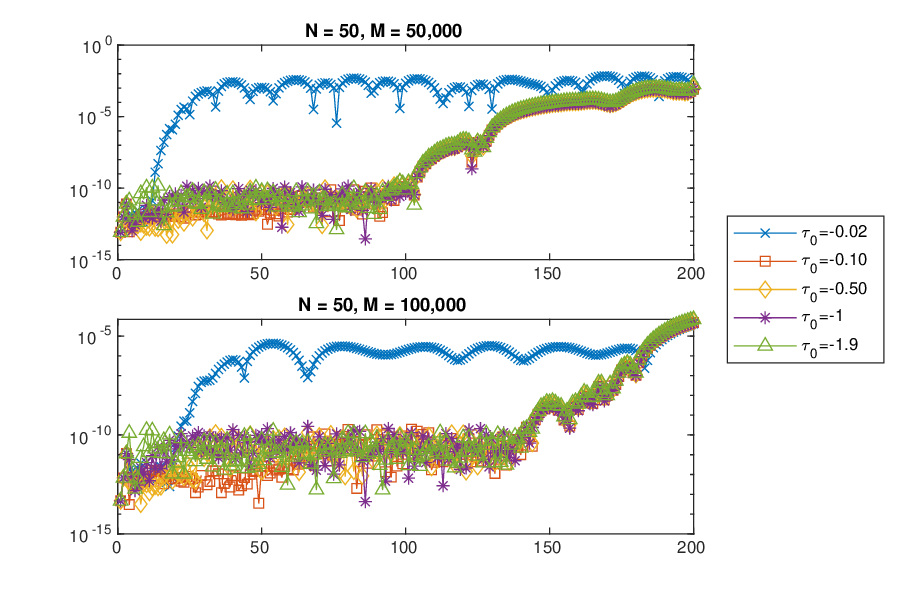}
\caption{Absolute error of \(\sqrt{\lambda_n} \) of Example \ref{Ex:73} using the spectral shift technique with an adaptative step size, with the seed value \( \eta_0=\sigma+\tau_0 i  \) for different values of \( \tau_0, \, \sigma=-2 \). The set of dilation constants is \( A = \{ 0.9, 1, 1.1 \}\).}
\label{fig:err73v2}
\end{figure}

\end{Ex}

\begin{Ex}\label{Ex:boyd}
We consider an application to a singular potential. Consider the Boyd equation together with the following boundary condition
\begin{equation}
\begin{cases}
-y''-\frac{1}{x}y=\lambda y, \quad 0<x\leq 1, \\
y(1,\lambda)=0.
\end{cases}
\end{equation}
In fig. \ref{fig:errboydv2} we see the absolute error of the square root of the eigenvalues \( \sqrt{\lambda_n} \) using the adaptive spectral shift strategy. Similarly to the previous example, not so small values of \( \tau_0 \) relative to \( \sigma \) provide good accuracy allowing one to compute at least a hundred eigenvalues.
\begin{figure}
\centering
 \includegraphics[width=5in,height=2.4in]{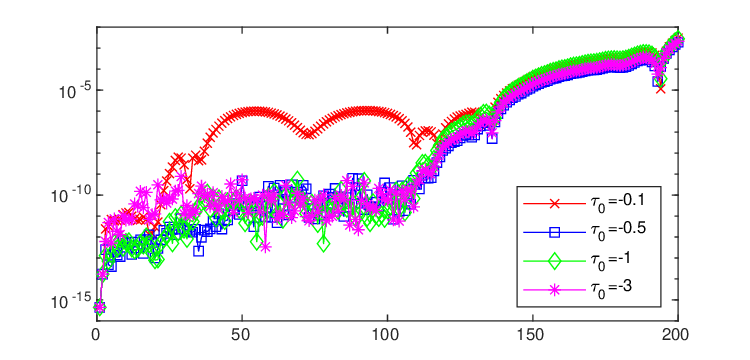}
\caption{Absolute error of \(\sqrt{\lambda_n} \) of Example \ref{Ex:boyd} using the spectral shift technique with an adaptive step size, with the seed value \( \eta_0=\sigma+\tau_0 i  \) for different values of \( \tau_0, \, \sigma=-6 \). The set of dilation constants is \( A = \{ 0.9, 1, 1.1 \}, \, N=100, \, M=100000 \).}
\label{fig:errboydv2}
\end{figure}
\end{Ex}

\begin{Rmk}
It is worth mentioning that our method based on the SPPS representation for the radial Dirac equation outperforms the direct approach based on the SPPS representation for the regular solution of perturbed Bessel equations \cite{Castillo}. It reduces the computational cost of the spectral shift technique and gives us the possibility to compute more eigenvalues with good precision.
\end{Rmk}

\subsection{Application of the method for a hydrogen-like atom with finite radius}\label{sec:atom}

We consider the problem of computing the energy levels of an electron orbiting around an atom with finite-radius \( R \). According to \cite{dong}, the \(d \)-dimensional Dirac equation can be reduced to the system of equations
\begin{align}\label{eqn:dirac-coulomb}
\begin{split}
\frac{d}{dx}G(x) + \frac{\kappa}{x}G(x)&=[\bar{E}-V(x)-M]F(x) \\
-\frac{d}{dx}F(x) +\frac{\kappa}{x}F(x) &= [\bar{E}-V(x)+M]G(x)
\end{split}
\end{align}
where \( \kappa=\pm(2l+d-1)/2 \).
We implicitly consider the case where \(d = 3\). For the numerical examples, we work with atomic units, so \( M=m_e·c \) and \( \bar{E}=E/c \), where \(c \) is the speed of light and  \(m_e \) is the mass of the electron. Moreover, we assume a uniform charge distribution within the atomic radius, and a Coulomb potential for \(x>R \), that is
\begin{equation}
V(x)=\begin{cases}
-\frac{\xi}{2R}(3-\frac{x^2}{R^2}), \quad &0<x\leq R \\
 - \frac{\xi}{x}, \quad &R<x
\end{cases}
\end{equation}
where \(\xi = Z \alpha \), \( Z \) is the nuclear charge number and \( \alpha \sim 1/137\) is the fine structure constant. Observe that the system \eqref{eqn:dirac-coulomb} is of type \eqref{eqn:dirac} in the interval \((0,R] \) and the SPPS method can be applied. We look for a solution in the semi-axis \( (0,\infty) \) such that it satisfies the asymptotics \eqref{eqn:asymu}, \eqref{eqn:asymv} at the origin, and it has a decaying behaviour at infinity.\\

For the purpose of readability we present the confluent hypergeometric equation approach as in \cite[Chapter 13]{dong} to compute the exact solutions of Hydrogen-like atoms with a Coulomb potential with proper modifications. Naturally, since we assume a finite-size radius, the confluent hypergeometric procedure with a Coulomb potential is valid for \( x>R \).
 We are interested in energy bound states \( \left\vert \bar{E} \right\vert<M \). Consider a new variable
\begin{equation}
\rho = 2x \sqrt{M^2-\bar{E}^2}.
\end{equation}
The change of variables leads to the system
\begin{align}\label{eqn:diracrho}
\begin{split}
\frac{d}{d\rho}G(\rho) + \frac{\kappa}{\rho}G(\rho)&=\left( - \frac{1}{2} \sqrt{\frac{M-\bar{E}}{M+\bar{E}}}+\frac{\xi}{\rho} \right) F(\rho), \\
\frac{d}{d\rho}F(\rho) -\frac{\kappa}{\rho}F(\rho) &= \left( - \frac{1}{2} \sqrt{\frac{M+\bar{E}}{M-\bar{E}}}-\frac{\xi}{\rho} \right)G(\rho).
\end{split}
\end{align}
Introduce a new pair of functions \( \Phi_{\pm}(\rho) \) such that
\begin{eqnarray}
G(\rho) &= \sqrt{M-\bar{E}} \left[ \Phi_+(\rho) + \Phi_-(\rho) \right], \\
F(\rho) &= \sqrt{M+\bar{E}} \left[ \Phi_+(\rho) - \Phi_-(\rho) \right].
\end{eqnarray}
Substituting into \eqref{eqn:diracrho} yields
\begin{align}\label{eqn:diracphis}
\begin{split}
\left\{ \frac{d}{d \rho} \Phi_+(\rho) + \frac{d}{d \rho} \Phi_-(\rho) \right\}+ \frac{\kappa}{\rho} \left[\Phi_+(\rho) + \Phi_-(\rho) \right] &= \left\{-\frac{1}{2}+ \frac{\xi}{\rho} \sqrt{\frac{M+\bar{E}}{M-\bar{E}}} \right\}\left[\Phi_+(\rho) - \Phi_-(\rho) \right]\\
\left\{ \frac{d}{d \rho} \Phi_+(\rho) - \frac{d}{d \rho} \Phi_-(\rho) \right\}- \frac{\kappa}{\rho} \left[\Phi_+(\rho) - \Phi_-(\rho) \right] &= \left\{-\frac{1}{2}- \frac{\xi}{\rho} \sqrt{\frac{M-\bar{E}}{M+\bar{E}}} \right\}\left[\Phi_+(\rho) + \Phi_-(\rho) \right]
\end{split}
\end{align}
Denote
\begin{equation}
\tau = \frac{\xi \bar{E}}{\sqrt{M^2-\bar{E}^2}}, \quad \tau'=\frac{\xi M}{\sqrt{M^2-\bar{E}^2}}.
\end{equation}
Addition and subtraction of \eqref{eqn:diracphis} can be simplified to obtain
\begin{equation}\label{eqn:dphipm}
\frac{d}{d \rho} \Phi_\pm(\rho) \mp \left( \frac{\tau}{\rho}-\frac{1}{2} \right) \Phi_\pm(\rho) = - \frac{\kappa \pm \tau'}{\rho} \Phi_\mp(\rho).
\end{equation}
From here, we can obtain a pair of decoupled second-order differential equations
\begin{equation}\label{eqn:dphi}
\left\{ \frac{d^2}{d \rho^2} + \frac{1}{\rho} \frac{d}{d \rho} + \left( -\frac{1}{4}+\frac{\tau \mp 1/2}{\rho}- \frac{\eta^2}{\rho^2} \right) \right\} \Phi_\pm (\rho) = 0,
\end{equation}
where
\begin{equation}
\eta = \sqrt{\kappa^2-\xi^2}>0.
\end{equation}
Define
\begin{equation}\label{eqn:Phipm}
\Phi_{\pm}(\rho) = \rho^\eta e^{-\rho/2} R_\pm(\rho).
\end{equation}
Substitution of this into \eqref{eqn:dphi} gives us
\begin{equation}\label{eqn:Rpm}
\begin{split}
\frac{d^2}{d \rho^2}R_{\pm}(\rho) + \left( -1 + \frac{1+2\eta}{\rho} \right) \frac{d}{d \rho} R_{\pm}(\rho) + \frac{\tau -\eta - \frac{1}{2} \pm \frac{1}{2}}{\rho} R_{\pm}(\rho)=0,
\end{split}
\end{equation}
We look for solutions of \eqref{eqn:Rpm}  so that the functions \( \Phi_\pm \) vanish at infinity. On the other hand, equation \eqref{eqn:Rpm} is a confluent hypergeometric equation. Because of the required behaviour at infinity, the looked-for solutions are confluent hypergeometric functions of the second kind, which exhibit bounded behaviour at infinity, see \cite[Sec. 9.10-9.12, f. 9.12.3]{Lebedev}. Therefore, we have that
\begin{align}\label{eqn:Tripm}
\begin{split}
R_+(\rho) &= c_+ U(\eta-\tau, 1+2\eta; \rho) \\
R_-(\rho) &= c_- U(1+\eta-\tau, 1+2\eta; \rho)
\end{split}
\end{align}
where \( U(\alpha, \gamma;z) \) denotes the confluent hypergeometric function of the second kind with parameters \( \alpha, \gamma \) and \( z \) is a complex variable. \\

The function \( U(\alpha, \gamma;z) \) possesses the following differentiation formula \cite[f. 9.10.12]{Lebedev}
\begin{equation}\label{eqn:diffU}
\frac{d}{dz} U(\alpha, \gamma, z) = -\alpha U(\alpha+1, \gamma+1;z).
\end{equation}
Using \eqref{eqn:diffU}, substitution of \eqref{eqn:Phipm} and \eqref{eqn:Tripm} into \eqref{eqn:dphipm} gives us
\begin{equation}\label{eqn:cpm-aux}
(\kappa+\tau')c_-U(1+\eta-\tau,1+2\eta;\rho)+(\eta-\tau)c_+\left[U(\eta-\tau,1+2\eta;\rho)-\rho U(1+\eta-\tau,2+2\eta;\rho) \right]=0.
\end{equation}
We recall  the recursive identity  for the confluent hypergeometric functions of the second kind \cite[f. 9.12.2]{Lebedev}.
\begin{equation}\label{eqn:confluent}
U(\alpha, \gamma; z) = z U(\alpha+1, \gamma+1; z) + \left( 1 + \alpha - \gamma \right) U(\alpha+1, \gamma; z).
\end{equation}
Using \eqref{eqn:cpm-aux} and \eqref{eqn:confluent} we obtain
\begin{equation}\label{eqn:cpm}
c_- = \frac{\eta^2-\tau^2}{\kappa+\tau'} c_+.
\end{equation}
Now, denote \( (\tilde{F}(x), \tilde{G}(x))^T \) the solution of system \eqref{eqn:dirac-coulomb} in the interval \( (0,R] \) that satisfies asymptotics \eqref{eqn:asymu} and \eqref{eqn:asymv}. Then, in order to have a solution to \eqref{eqn:dirac-coulomb} in the semi-axis that satisfies asymptotics \eqref{eqn:asymu} and \eqref{eqn:asymv} and decays at infinity the solution  \( (\tilde{F}(x), \tilde{G}(x))^T \) and the solution  \( ({F}(x), {G}(x))^T \) must be a multiple of one another at the point \(x =R \). Then, the following characteristic equation must hold
\begin{equation}
\Delta:=\begin{vmatrix}
\tilde{F} & F \\
\tilde{G} & G
\end{vmatrix}(R)=\tilde{F}(R)G(R)-F(R)\tilde{G}(R)=0.
\end{equation}
\begin{Ex}
In this example, we work with hydrogen-like oxygen, that is, \( Z=8. \) The radius of the oxygen atom is taken from \cite[Table I]{atomic-radii}. Note that in the previous reference, atomic radii are expressed in angstroms and must be converted to atomic units of length. For the numerical results, we use the numerical value \(c=137.036 \) for the speed of light, and we take \( \kappa =2 \). We applied the spectral shift technique to improve numerical accuracy  of the results with a shift by \( \bar{E}_0=137 \).
For the ease of readability, we present the results for \( E-m_ec^2=c\bar{E}-m_ec^2 \) in Table \ref{tab:oxygen}. In fig. \ref{fig:oxygen} we present the plot of the small and large wave components.
\begin{table}
\centering
\begin{tabular}{cc}
\(n \) &  \(E-m_ec^2 \) \\
\hline
1   &       -4.9982701713634 \\
2   &      -2.57809776595968 \\
3   &      -1.56812208970587 \\
4   &      -1.05246356970019 \\
5   &     -0.754592132634571 \\
6   &     -0.567255972709972 \\
7   &     -0.441874608535727 \\
8   &     -0.353877007739356 \\
9   &     -0.289760463067068 \\
10  &      -0.241608021409775 \\
\hline
\end{tabular}
\caption{First ten energy values for the hydrogen like oxygen with a finite size radius shifted by \(m_ec^2 \).}
\label{tab:oxygen}
\end{table}
\begin{figure}[t]
\centering
\begin{subfigure}{\textwidth}
 \centering
 \includegraphics[width=5in,height=2.4in]{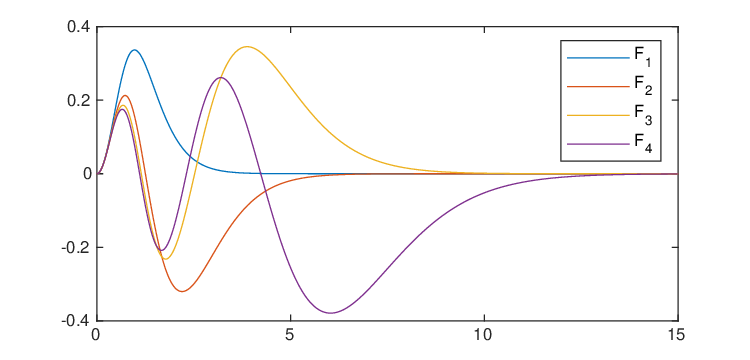} \caption{}
\label{subfig:oxygenG}
\end{subfigure}\\
\begin{subfigure}{\textwidth}
 \centering
 \includegraphics[width=5in,height=2.4in]{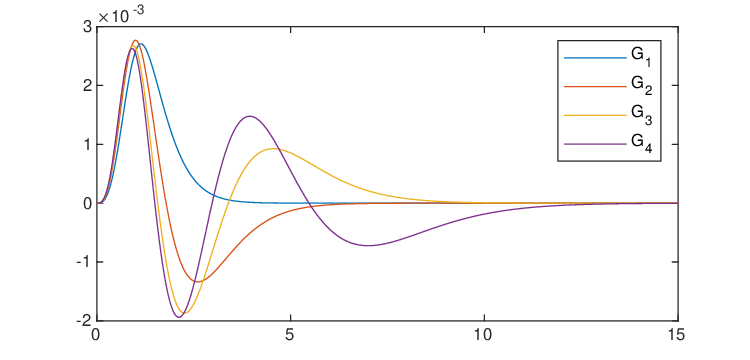} \caption{}
\label{subfig:oxygenF}
\end{subfigure}
\caption{Graph (a) and (b) corresponds to the first four large and small wave components for the hydrogen-like oxygen with finite size radius respectively. }
\label{fig:oxygen}
\end{figure}
\end{Ex}

\end{document}